\documentclass[11pt]{amsart}

\usepackage{amsmath,amssymb,amsthm,url,mathtools,microtype}
\usepackage[margin=1.2in]{geometry}

\author{Paul Pollack}
\address{Department of Mathematics\\University of Georgia\\Athens, GA 30602}
\email{pollack@uga.edu}

\author{Carl Pomerance}
\address{Department of Mathematics\\Dartmouth College\\Hanover, NH 03755}
\email{carlp@math.dartmouth.edu}

\author{Lola Thompson}
\address{Department of Mathematics\\Oberlin College\\Oberlin, OH 44074}
\email{lola.thompson@oberlin.edu}

\title{Divisor-sum fibers}

\DeclareMathAlphabet{\curly}{U}{rsfs}{m}{n}

\newtheorem{thm}{Theorem}[section]

\newtheorem{prop}[thm]{Proposition}

\newtheorem{conjecture}[thm]{Conjecture}

\newtheorem*{theorem*}{Theorem}
\newtheorem{hyp}[thm]{Hypothesis}

\theoremstyle{remark}
\newtheorem*{remark}{Remark}

\numberwithin{equation}{section}

\newcommand\A{\curly{A}}

\newcommand\E{\curly{E}}
\newcommand\rad{\mathrm{rad}}

\renewcommand{\phi}{\varphi}
\makeatletter
\renewcommand{\pod}[1]{\mathchoice
  {\allowbreak \if@display \mkern 18mu\else \mkern 8mu\fi (#1)}
  {\allowbreak \if@display \mkern 18mu\else \mkern 8mu\fi (#1)}
  {\mkern4mu(#1)}
  {\mkern4mu(#1)}
}



\begin{document}

\begin{abstract} Let $s(\cdot)$ denote the sum-of-proper-divisors function, that is, $s(n) = \sum_{d\mid n,~d<n}d$. Erd\H{o}s--Granville--Pomerance--Spiro conjectured that for any set $\A$ of asymptotic density zero, the preimage set $s^{-1}(\A)$ also has density zero. We prove a weak form of this conjecture: If $\epsilon(x)$ is any function tending to $0$ as $x\to\infty$, and $\A$ is a set of integers  of cardinality at most $x^{\frac12+\epsilon(x)}$, then the number of integers $n\le x$ with $s(n) \in \A$ is $o(x)$, as $x\to\infty$. In particular, the EGPS conjecture holds for infinite sets with counting function $O(x^{\frac12 + \epsilon(x)})$. We also disprove a hypothesis from the same paper of EGPS by showing that for any positive numbers $\alpha$ and $\epsilon$, there are integers $n$ with arbitrarily many $s$-preimages lying between $\alpha(1-\epsilon)n$ and $\alpha(1+\epsilon)n$. Finally, we make some remarks on solutions $n$ to congruences of the form $\sigma(n) \equiv a\pmod{n}$, proposing a modification of a conjecture appearing in recent work of the first two authors. We also improve a previous upper bound for the number of solutions $n \leq x$, making it uniform in $a$.
\end{abstract}

\maketitle


\section{Introduction}
Let $s(n)$ denote the sum-of-proper-divisors of an integer $n$, i.e., $s(n) = \sigma(n) - n$. The function $s(\cdot)$ has been a source of fascination since the time of the ancient Greeks, who classified numbers as \textit{perfect}, \textit{abundant}, or \textit{deficient}, according to whether $s(n) = n$, $s(n) > n$, or $s(n) < n$. Two thousand years later, the desire to understand statistical properties of  $s(n)$ played a motivating role in the early development of probabilistic number theory by figures such as Schoenberg, Davenport, Erd\H{o}s, and Wintner.

It is interesting that the function $s$ can map sets of asymptotic density 0 to sets of positive density.
Indeed, if $\A$ is the set of numbers $pq$, where $p,q$ are primes, then $\A$ has zero asymptotic density, yet
$s(\A)$ has asymptotic density $1/2$, the latter claim coming from the fact that the Goldbach conjecture
has at most a zero-density set of exceptions.  We also know of sets $\A$ of positive density such that
$s^{-1}(\A)$ not only has zero density, but, in fact, is empty (see Erd\H{o}s \cite{erdos}).

Our focus in this paper is on what can be said about $s^{-1}(\A)$ when $\A$ has asymptotic density 0.
Part of the landscape here is a conjecture proposed by Erd\H{o}s, Granville, Pomerance, and Spiro \cite{egps} (hereafter ``EGPS'') in their study of the iterates of $s(n)$.

\begin{conjecture}\label{conj:EGPS} Let $\A$ be a set of asymptotic density zero. Then $s^{-1}(\A)$ also has asymptotic density zero.
\end{conjecture}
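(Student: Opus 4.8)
The natural line of attack is to count $N := \#\{n \le x : s(n) \in \A\}$ by stratifying $n$ according to its largest prime factor $p = P(n)$. For all but $o(x)$ integers $n \le x$ one has $p^2 \nmid n$, and then, writing $n = pm$ with $p \nmid m$ and using $\sigma(pm) = (p+1)\sigma(m)$,
$$s(n) = (p+1)\sigma(m) - pm = p\,s(m) + \sigma(m).$$
Hence, for each fixed cofactor $m > 1$, the values $s(pm)$ as $p$ ranges over primes form an injective image of an arithmetic progression: $s(pm) = a$ forces $p = (a - \sigma(m))/s(m)$, so there is at most one prime $p$ per target $a \in \A$. This rigidity is what one hopes to leverage — a thin target set $\A$ ought to force few admissible pairs $(p,m)$, hence few $n$.

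Before exploiting this, I would discard the atypical $n \le x$: those that are $y$-smooth for a threshold $y = x^{o(1)}$, those divisible by the square of their largest prime factor, and those with $\sigma(n)/n$ abnormally large or with an unusual number of prime factors. Standard anatomy-of-integers estimates (smooth-number bounds, Mertens' theorems, and the normal-order results of Hardy--Ramanujan and Erd\H{o}s--Kac) confine these to a set of size $o(x)$. What remains is a sum
$$N \le o(x) + \sum_{m} \#\bigl\{p : pm \le x,\ p > P(m),\ p\,s(m) + \sigma(m) \in \A\bigr\}$$
over well-behaved cofactors $m$, for which $s(m)$ and $\sigma(m)$ are comparable to $m$.

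When $\A$ is \emph{thin} — the regime $|\A \cap [1,t]| \le t^{1/2+\epsilon(t)}$ of the weak theorem — the injectivity above lets one profitably reindex the double sum by the target $a \in \A$; a sieve estimate then bounds, on average over $a$, the number of admissible cofactors $m$ with $a = p\,s(m) + \sigma(m)$, and the smallness of $|\A|$ carries the day. The critical exponent $\tfrac12$ reflects the balance point between the two free parameters $p$ and $m$ subject to $pm \le x$: it is precisely where a single arithmetic-progression constraint ceases to be wasteful.

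The hard part, and the reason the full conjecture remains open, is that the identity $s(pm) = p\,s(m) + \sigma(m)$ pins down only \emph{one} prime factor and so yields only a single linear constraint tying $n$ to $\A$. For a merely density-zero target — say $|\A \cap [1,t]| \asymp t/\log\log t$, or an adversarial $\A$ favoring the residue class $\sigma(m) \bmod s(m)$ — this one constraint is too lossy, and reassembling the per-$m$ counts overshoots $o(x)$ by a logarithmic factor. Closing the gap would seem to demand either a genuinely \emph{bilinear} input, using two large prime factors $n = p_1 p_2 m$ to produce a hyperbola-type condition amenable to Bombieri--Vinogradov or large-sieve technology, or else direct control of the fibers $s^{-1}(a)$ for a positive-density set of values $a$ — which is essentially equivalent to the conjecture itself. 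It is exactly this passage from thin to arbitrary density-zero targets that constitutes the main obstacle.
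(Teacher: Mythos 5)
You were asked to prove Conjecture \ref{conj:EGPS}, the Erd\H{o}s--Granville--Pomerance--Spiro conjecture, and the first thing to say is that there is no proof in the paper to compare against: this is an open problem, and the paper's main result (Theorem \ref{thm:preimageofs}) is explicitly only a weak form of it, valid when $\#\A \le x^{1/2+\epsilon(x)}$. Your submission, to its credit, does not pretend otherwise --- it sketches an attack on the thin regime and then, in your own words, explains ``the reason the full conjecture remains open.'' As a proof of the statement it therefore has a gap that is not a fixable oversight but the entire problem: a density-zero set $\A$ may have counting function $x/\log\log x$ or larger, and your accounting --- one linear constraint $p\,s(m)+\sigma(m)=a$ per target $a\in\A$, hence at most one prime $p$ per pair $(m,a)$ --- cannot beat the trivial bound once $\#\A$ exceeds roughly $x^{1/2}$ in dyadic ranges. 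Worse, the fibers themselves can be large: Theorem \ref{thm:2ndmain} of the paper produces values $m$ with at least $\exp(c\log m/\log\log m)$ preimages, so any genuine proof must control fiber sizes on average over a positive-density-scale family of targets, which, as you correctly note, is essentially the conjecture itself.

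On the mathematics of what you did write: the identity $s(pm)=p\,s(m)+\sigma(m)$ for $p=P(n)$, $p\nmid m$, and the resulting rigidity, are correct, and this largest-prime-factor decomposition is essentially the approach of Booker that the paper cites; the paper remarks that this route yields the threshold $x^{1/2-\epsilon}$ for fixed $\epsilon>0$. The paper's own proof of its weak form is slightly different and slightly stronger: it splits $n=de$ at the largest divisor $d\le\sqrt{x}$ (after discarding an exceptional set via Ford's theorem on the distribution of divisors), shows $\gcd(d,e)=1$ and $s(e)\ll x^{1/2-10\epsilon(x)}\log x$, and then places $s(e)$ in a single residue class modulo $s(d)/\gcd(\sigma(d),s(d))$, which wins the extra factor needed to reach $x^{1/2+\epsilon(x)}$. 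If your goal is to write up a correct result, you should restate your target as Theorem \ref{thm:preimageofs} (or the weaker $x^{1/2-\epsilon}$ version) and carry out the sieve step you currently wave at --- bounding, for fixed $a$, the number of admissible cofactors $m$ --- in full; as a proof of Conjecture \ref{conj:EGPS} the argument cannot be completed by these methods.
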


If true, a consequence (see \cite{egps}) would be that for each fixed positive integer $k$, but for
a set of numbers $n$ of asymptotic density 0, if $s(n)<n$, then $s_k(n)<s_{k-1}(n)<\dots<n$, where
$s_j$ is the $j$-fold iteration of $s$.
With the inequality signs reversed, this is an unconditional theorem of Erd\H os. Another consequence of Conjecture \ref{conj:EGPS} is given in \cite{firstfunction}.

Some special cases of the EGPS conjecture can be read out of the literature. For  example, it is proved in \cite{pollack14} that if $\A$ is the set of primes, then the counting function of $s^{-1}(\A)$ is $O(x/\log x)$. Also, Troupe showed in \cite{troupe} that $s^{-1}(\A_\epsilon)$ has density zero for each of the sets
\[ \A_\epsilon = \{m : |\omega(m) - \log \log m| > \epsilon \log \log m\}. \]
In \cite{pollack15}, it is shown that $s^{-1}(\A)$ has density zero when $\A$ is the set of palindromes (in any given base). All of these arguments make critical use of structural features of $\A$; the methods do not carry over to arbitrary sets with similar counting functions.


In the present paper, we make some partial progress on Conjecture \ref{conj:EGPS}. In contrast with the aforementioned results, the structure of $\A$ is irrelevant in the theorem, but we must assume a strong condition on the count of elements of $\A$.

\begin{thm}\label{thm:preimageofs} Let $\epsilon = \epsilon(x)$ be a fixed function tending to $0$ as $x \rightarrow \infty$. Suppose that $\A$ is a set of at most $x^{1/2+\epsilon(x)}$ positive integers. Then, as $x\to\infty$,
$$\#\{n\le x: s(n) \in \A\} = o_\epsilon(x), $$
uniformly in the choice of $\A$.
\end{thm}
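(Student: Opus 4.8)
The plan is to reduce the count to integers $n$ with a large, unrepeated largest prime factor, convert the condition $s(n)\in\A$ into a divisibility condition, and then estimate the resulting sum by an upper-bound sieve organized according to the sizes of the elements of $\A$. Write $P(n)$ for the largest prime factor of $n$ and fix a slowly growing threshold $y=\exp(\log x/\log\log x)$, so that $y=x^{o(1)}$. The integers $n\le x$ that are prime, or $y$-smooth, or divisible by $P(n)^2$ together contribute only $O(\pi(x))+x\rho(\log\log x)+O(x/y)=o(x)$, the three terms counting primes, smooth numbers (via Dickman's function $\rho$), and integers with a repeated large prime factor. Hence it suffices to count $n=Pm\le x$ with $P:=P(n)>y$, $P\nmid m$, and $m\ge 2$.

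For such $n$ the multiplicativity of $\sigma$ gives the identity
\[ s(n) = (P+1)\sigma(m) - Pm = P\,s(m) + \sigma(m). \]
Thus $s(n)=a$ forces $P=(a-\sigma(m))/s(m)$; reducing modulo $s(m)$ and using $\sigma(m)\equiv m\pmod{s(m)}$, this is equivalent to the divisibility $s(m)\mid (a-m)$ together with the requirement that $(a-\sigma(m))/s(m)$ be a prime lying in $(\max(y,P(m)),\,x/m]$. So the quantity to control is
\[ \sum_{a\in\A}\ \#\{\,m\ge 2 : s(m)\mid (a-m),\ (a-\sigma(m))/s(m)\text{ prime},\ Pm\le x\,\}. \]
A direct second-moment or Cauchy--Schwarz bound is not available here, because $s$ has extremely popular values: $s(n)=1$ for every prime $n$, and $s(pq)=p+q+1$ realizes Goldbach-rich values many times over, so that $\sum_a N(a)^2\gg x^2/(\log x)^2$. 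The sum over $\A$ must therefore be controlled using the cardinality and size distribution of $\A$ rather than an $L^2$ estimate.

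I expect the main obstacle to be the contribution of $n$ with very few prime factors, and in particular the products of two primes $n=pq$, which already reveals why the exponent $\tfrac12$ is critical. Here $s(m)=1$ (taking $m=q$), so the divisibility is vacuous and the condition becomes $p+q=a-1$ with $pq\le x$. An upper-bound (Selberg or Brun) sieve bounds the number of such representations by $O\!\big(\tfrac{x}{a(\log x)^2}\cdot\tfrac{a}{\varphi(a)}\big)$ when $a>2\sqrt x$, and by a comparable expression when $a\le 2\sqrt x$. The extremal choice of $\A$ packs its elements just above $2\sqrt x$, where the per-$a$ count decays like $x/a$; summing over a set of at most $x^{1/2+\epsilon(x)}$ integers then yields
\[ \ll \frac{x\log\log x}{(\log x)^2}\sum_{2\sqrt x<a\le x^{1/2+\epsilon(x)}}\frac1a \ll \frac{x\log\log x}{\log x}\cdot\epsilon(x) = o(x), \]
the decisive factor $\epsilon(x)$ arising from the length $\epsilon(x)\log x$ of the critical logarithmic range of $a$ near $\sqrt x$. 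This is precisely where the hypotheses $\#\A\le x^{1/2+\epsilon(x)}$ and $\epsilon(x)\to0$ are needed, and where uniformity in $\A$ is transparent, since only $\#\A$ and the sizes of its elements enter.

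Finally I would treat the remaining ranges, where $m\ge 2$ is composite. Then $m$ has a prime factor at most $\sqrt m$, so $s(m)\ge\sqrt m$, making the modulus in $s(m)\mid(a-m)$ large; combining this with the primality of $(a-\sigma(m))/s(m)$ and an upper-bound sieve produces estimates of the same shape as in the two-prime case but with extra saving from the larger modulus, so these contributions are handled with room to spare after the same dyadic decomposition of $\A$ by size. Assembling the negligible, two-prime, and composite contributions then gives the bound $o_\epsilon(x)$, uniformly in $\A$. The crux of the argument, and the step I expect to require the most care, is the sieve-plus-dyadic estimate for products of two primes, since it is there that the bound $x^{1/2+\epsilon(x)}$ is exactly saturated and the hypothesis $\epsilon(x)\to 0$ becomes indispensable.
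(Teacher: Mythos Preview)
Your reduction to $n=Pm$ with $P=P(n)>y$ and $P\nmid m$, together with the identity $s(n)=Ps(m)+\sigma(m)$, is fine, and the two-prime case (so $m$ prime, $n=pq$) via a Goldbach-type sieve and a dyadic decomposition of $\A$ by size is essentially correct; it is indeed there that the threshold $x^{1/2+\epsilon(x)}$ becomes sharp.

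The gap is in the composite-$m$ case. You assert that $s(m)\ge\sqrt m$ makes the divisibility $s(m)\mid(a-m)$ restrictive enough to win, but this condition is self-referential: the modulus $s(m)$ depends on the very variable $m$ you are trying to count, so you cannot simply say ``$m$ lies in one class modulo a large number.'' There is no evident sieve here either: in the two-prime case your sieve acted on the prime $P$ for a \emph{fixed} modulus, whereas now the modulus varies with $m$. If instead you switch the order and sum over $m$, each fixed composite $m$ may contribute up to $\min(\#\A,\pi(x/m))$ values of $n$ (one for each $a\in\A$ of the form $Ps(m)+\sigma(m)$ with $P$ prime), and summing $\min\!\big(x^{1/2+\epsilon},\,x/(m\log x)\big)$ over composite $m\le x/y$ gives only $\ll x$, with no saving. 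So ``handled with room to spare'' is not justified; this is the real obstruction.

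For comparison, the paper does not split on whether $n/P(n)$ is prime. It writes $n=de$ where $d$ is the \emph{largest divisor} of $n$ not exceeding $\sqrt x$, after first discarding the $o(x)$ integers possessing a divisor in a short window $(x^{1/2-10\epsilon},x^{1/2+10\epsilon})$. For the survivors one then has $d\le x^{1/2-10\epsilon}$, $\gcd(d,e)=1$, every prime factor of $e$ exceeds $x^{20\epsilon}$, and hence $s(e)\ll x^{1/2-10\epsilon}\log x$. The relation $a=\sigma(d)\,s(e)+s(d)\,e$ now places $s(e)$ in a fixed residue class modulo $s(d)/\gcd(\sigma(d),s(d))$; since $s(d)\ge d/\log x$ (because $d$ has a prime factor $\le\log x$), a direct count yields at most $x^{1/2-9\epsilon}$ non-exceptional preimages of each $a$, uniformly. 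Summing over $\#\A\le x^{1/2+\epsilon}$ finishes. The crucial difference from your sketch is that the modulus $s(d)$ depends only on the summation variable $d$, not on an unknown determined by the equation; this is precisely what converts the self-referential divisibility into a usable congruence and makes the composite case go through.
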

\noindent As an example, since $s(n)<2n\log\log n$ for all large $n$ and since there are only
$O_b(\sqrt{x\log\log x})$ base-$b$ palindromes up to $2x\log\log x$, Theorem \ref{thm:preimageofs} implies the theorem of \cite{pollack15} alluded to above.

Our proof of Theorem \ref{thm:preimageofs} is presented in \S \ref{sec:sparse}. We borrow some ideas from recent work of Booker \cite{booker}; Booker's arguments by themselves  almost immediately give the slightly weaker result with $x^{1/2+\epsilon(x)}$ replaced by $x^{1/2 - \epsilon}$ for any fixed (constant) $\epsilon > 0$.

EGPS \cite[p.\ 170]{egps} point out that their Conjecture \ref{conj:EGPS} would be a consequence of the following assertion about the sizes of elements in a fiber.
\begin{hyp}
\label{assert:quad}
For each positive number $\theta$ there exists a constant $C_\theta$ such that for all positive integers $m$ there exist at most $C_\theta$ numbers $n \leq \theta m$ with $s(n) = m.$
\end{hyp}
\noindent The authors write in \cite{egps}: ``We are not sure we believe this hypothesis and in fact it may be possible
to disprove it." Our second theorem, shown in \S\ref{sec:assertion}, disproves Hypothesis \ref{assert:quad} in a strong way.

\begin{thm}\label{thm:2ndmain} There is a constant $c>0$ for which the following holds. Let $\alpha$ and $\epsilon$ be positive real numbers. There are infinitely many $m$ with at least $\exp(c\log m/\log\log m)$ $s$-preimages 
that lie in the interval $(\alpha(1-\epsilon)m, \alpha(1+\epsilon)m)$.
\end{thm}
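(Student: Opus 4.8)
The plan is to realize the preimages in the shape $n = qd$, where $q$ is prime and $d$ runs over a carefully chosen family $\mathcal{D}$ of ``building blocks.'' For $q \nmid d$ one has the identity
\[ s(qd) = \sigma(qd) - qd = (q+1)\sigma(d) - qd = q\, s(d) + \sigma(d), \]
so that $s(qd) = m$ holds exactly when $q = (m-\sigma(d))/s(d)$. Thus, to produce many preimages of a single value $m$, it suffices to exhibit one integer $m$ for which the linear form $q_d := (m - \sigma(d))/s(d)$ is prime for many $d \in \mathcal{D}$; each such $d$ then yields a solution $n_d = q_d d$ of $s(n_d) = m$, and distinct $d$ give distinct $n_d$ because $q_d$, being enormous, is the largest prime factor of $n_d$ (which also secures $q_d \nmid d$). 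Since $q_d \approx m/s(d)$, we have $n_d = q_d d \approx (d/s(d))\, m$; so if every $d \in \mathcal{D}$ satisfies $d/s(d) \in (\alpha(1-\epsilon), \alpha(1+\epsilon))$ --- equivalently $\sigma(d)/d$ lies in a short interval about $1 + 1/\alpha$ --- then all the $n_d$ fall in the window $(\alpha(1-\epsilon)m, \alpha(1+\epsilon)m)$ automatically.

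First I would construct $\mathcal{D}$. Taking $H$ to be a product of about $k$ primes all of size roughly $T$, a divisor $d \mid H$ corresponds to a subset $S$ of these primes with $\log(\sigma(d)/d) \approx \sum_{p \in S} 1/p$. Choosing the number and size of the primes so that the mean of this quantity over a random divisor equals $\log(1+1/\alpha)$ while its variance tends to $0$, a $1-o(1)$ proportion of the $2^k$ divisors have $\sigma(d)/d$ within any prescribed factor of $1+1/\alpha$. Balancing $k$ against $T$ so that $T \asymp \log H/\log\log H$ yields $|\mathcal{D}| \ge \exp(c\log H/\log\log H)$, and a short computation shows the governing constant $c$ may be taken independent of $\alpha$; this is the source of the claimed count $\exp(c\log m/\log\log m)$ once $m$ is taken to be a fixed power of $H$. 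These typical blocks have $d$ of size about $\sqrt{H}$.

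It remains to locate an $m$ for which $q_d$ is prime for many $d \in \mathcal{D}$, and I would seek this by averaging over $m$. Counting pairs $(d,m)$ with $q_d$ prime as $m$ ranges over an interval of length about $X$ gives a total of roughly $\frac{X}{\log X}\sum_{d \in \mathcal{D}} \tfrac{1}{s(d)}$, since for fixed $d$ the congruence $s(d) \mid m - \sigma(d)$ holds for a proportion $1/s(d)$ of the $m$ and the quotient is then prime with density $1/\log X$. Herein lies the main difficulty: the localization condition forces $s(d) \approx d/\alpha$ to be large (the relevant $d$ being of size about $\sqrt{H}$), so $\sum_d 1/s(d)$ is minuscule and the crude average over all $m$ is far below $1$. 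The natural resolution is to restrict $m$ to the single arithmetic progression --- obtained, when the system is consistent, from the Chinese Remainder Theorem --- on which $s(d) \mid m - \sigma(d)$ holds simultaneously for every $d \in \mathcal{D}$; this removes the wasteful factor and raises the conditional average to about $|\mathcal{D}|/\log X$, which comfortably exceeds $\exp(c'\log m/\log\log m)$, so some admissible $m$ attains the average and letting $H \to \infty$ produces infinitely many. The price is that the modulus $\lcm_{d}\, s(d)$ of this progression is enormous compared with $X$, so detecting the primes $q_d$ there --- and checking that the associated singular series do not degenerate --- cannot be done by Siegel--Walfisz alone. Controlling primes in these sparse progressions, for instance by exploiting averaging over the varying moduli $s(d)$ in the spirit of Bombieri--Vinogradov, or via a sieve lower bound, is the decisive step; I expect it, rather than the combinatorial construction of $\mathcal{D}$, to be where essentially all the work lies.
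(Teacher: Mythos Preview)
Your plan uses preimages $n=qd$ with a single prime $q$ and a large family $\mathcal{D}$ of blocks, so that $s(qd)=m$ becomes the linear condition $q=(m-\sigma(d))/s(d)$. The paper instead uses preimages $n=n_0pq$ with two varying primes and a \emph{fixed} $n_0$ chosen so that $s(n_0)/n_0$ is close to $\alpha^{-1}$; the key identity is
\[
s(n_0)\,s(n_0pq)=(s(n_0)p+\sigma(n_0))(s(n_0)q+\sigma(n_0))+s(n_0)\sigma(n_0)-\sigma(n_0)^2,
\]
which turns the problem into producing many representations of a single integer $k$ as a product $(bp+a)(bq+a)$. That product structure is exactly what allows a Prachar--Erd\H os type pigeonhole: one forces every $(bp+a)(bq+a)$ to be divisible by a highly composite modulus $M$, so the targets are confined to the sparse set of multiples of $M$, while the number of pairs $(p,q)$ produced (via AGP-type lower bounds for primes in progressions to moduli $d\mid M$) is $\gg \tau(M)\cdot x^2/(M\log^2x)$. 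Pigeonholing among $\ll x^2/M$ multiples of $M$ yields $\gg \tau(M)/\log^2x$ representations of a single $k$, which is $\exp(c\log k/\log\log k)$.

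Your single-prime framework lacks this multiplicative collapse, and that is a genuine gap rather than a technicality. As you correctly compute, averaging over all $m$ gives only about $|\mathcal{D}|/(\sqrt{H}\log X)$ good $d$ per $m$, which is $o(1)$. The fix you propose---restrict $m$ to the CRT class satisfying $m\equiv\sigma(d)\pmod{s(d)}$ simultaneously for all $d\in\mathcal{D}$---faces two obstacles that seem fatal. First, there is no reason for this system to be consistent: for $d_1,d_2\in\mathcal{D}$ one would need $\sigma(d_1)\equiv\sigma(d_2)\pmod{\gcd(s(d_1),s(d_2))}$, and nothing in your construction of $\mathcal{D}$ arranges this (the values $s(d)$ for divisors $d$ of a squarefree $H$ are not mutually coprime and the residues $\sigma(d)$ are not visibly compatible). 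Second, even granting consistency, the modulus $L=\lcm_d s(d)$ is of size roughly $\exp(\sqrt{H})$ or larger, while the $q_d$ you need to be prime lie in progressions to moduli $L/s(d)$; this is far beyond the $X^{1/2}$ barrier of Bombieri--Vinogradov, and averaging over $d$ does not help because all the progressions share essentially the same huge modulus. No sieve lower bound is available in this regime either. In short, the ``decisive step'' you flag is not merely where the work lies---it is where the method breaks down. The paper's two-prime identity is precisely the device that replaces an intractable system of linear congruences by a single divisibility condition with a modulus one controls.
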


\noindent Our proof of Theorem \ref{thm:2ndmain} shows that $c=1/7$ is admissible.

In \S \ref{sec:tamsconjecture}, we use ideas presented in the previous sections to study solutions to equations of the form $\sigma(n) = kn+a$, where $k$ and $a$ are integers and $k \geq 0$. Such equations have been studied extensively by the second author over the course of his career, dating back to a 1975 paper \cite{pom75}, and are connected to many classical problems in number theory. We show that a conjecture in \cite{app}, that was modified in \cite{polpom}, needs to be further modified. Furthermore, we show that an upper bound for the count of $n \leq x$ satisfying $\sigma(n)\equiv a\pmod n$, which is given in \cite{pom75}, can be made uniform in $a$. In particular, we prove the following.

\begin{thm}
\label{prop:reallyfinalnotkidding}
For any integer $a$, the number of $n\le x$ with $\sigma(n)\equiv a\pmod n$ is $O(x/\log x)$, uniformly in $a$.
\end{thm}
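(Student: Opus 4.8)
The plan is to bound the number of $n \le x$ with $\sigma(n) \equiv a \pmod n$ by splitting into cases based on the size of the quotient $k = (\sigma(n)-a)/n$. Since $\sigma(n) < Cn\log\log n$ for some absolute constant and all large $n$, the congruence $\sigma(n) \equiv a \pmod n$ forces $\sigma(n) = kn + a$ with $0 \le k \le 2\log\log n$ (say), so there are only $O(\log\log x)$ admissible values of $k$. The idea is to obtain, for each fixed $k$, a bound of the shape $O(x/(\log x \cdot \log\log x))$ for the number of $n \le x$ solving $\sigma(n) = kn + a$, uniformly in $a$; summing over the $O(\log\log x)$ values of $k$ then yields the claimed $O(x/\log x)$.

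First I would dispose of the small values of $k$ separately, since these require the most care. The cases $k=0$ and $k=1$ correspond to $\sigma(n) = a$ and $\sigma(n) = n + a$ (i.e.\ $s(n) = a$), each of which pins $n$ down to at most a handful of values for fixed $a$ — in fact $\sigma(n) = a$ and $s(n) = a$ each have $O_\epsilon(x^\epsilon)$ solutions $n \le x$ by standard divisor-bound arguments, which is negligible. For the remaining range $2 \le k \le 2\log\log x$, I would fix $k$ and count $n \le x$ with $\sigma(n) = kn + a$. The key observation is that for such $n$ one has $\sigma(n)/n = k + a/n$, so $n$ must have $\sigma(n)/n$ very close to an integer; equivalently $n$ is close to being "$k$-fold perfect." I would exploit that the number of $n \le x$ with $\sigma(n)/n$ lying in any short interval around a fixed rational is controlled, and more concretely that a positive proportion of the prime factors of a typical $n$ must be absent or heavily constrained, forcing $n$ into a sparse set.

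The main engine, and the step I expect to be the crux, is a sieve/counting estimate showing that for each fixed $k \ge 2$, the number of $n \le x$ with $\sigma(n) \equiv a \pmod n$ and quotient exactly $k$ is $O(x/(\log x)^{1+\delta})$ or at least $O(x/(\log x \cdot \log\log x))$ \emph{uniformly in} $a$. The natural approach is to note that $\sigma(n) = kn + a$ implies $n \mid \sigma(n) - a$, and then to fix the largest prime factor $P$ of $n$, writing $n = Pm$ with $P > m$ (the generic case). Then $\sigma(n) = \sigma(m)(P+1)$, and the congruence becomes a linear condition on $P$ modulo $m$ (or modulo $n$), so that $P$ lies in a fixed residue class to a modulus of size comparable to $n/P$; the Brun--Titchmarsh inequality then bounds the number of available primes $P$ in that class up to $x/m$, gaining a factor of $\log$. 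Summing over $m$ and handling the atypical $n$ (those with no large prime factor, or with a repeated large prime factor) by the standard smooth-number and squarefull-number bounds should give the per-$k$ estimate with the needed uniformity in $a$.

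The hard part will be securing genuine uniformity in $a$ while summing the Brun--Titchmarsh contributions over $m$ without losing the saving to a divergent sum; one must arrange the modulus in the residue condition to be large enough that Brun--Titchmarsh applies and the resulting bound $\sum_m \frac{x/m}{\varphi(\text{mod})\log(\cdot)}$ converges to something of size $O(x/\log x)$ after summing over $k$. I would also need to verify that the exceptional classes of $n$ — smooth numbers, numbers with large square factors, and the boundary cases where $P \le m$ — each contribute $o(x/\log x)$ uniformly, which should follow from well-known estimates (de Bruijn's bound for smooth numbers and convergence of $\sum 1/\text{squarefull}$) that are independent of $a$. Collecting the $O(\log\log x)$ values of $k$, each contributing $O(x/(\log x\log\log x))$, then completes the proof.
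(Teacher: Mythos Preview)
Your strategy of splitting on the quotient $k=(\sigma(n)-a)/n$ and proving a per-$k$ bound of $O(x/(\log x\cdot\log\log x))$ cannot succeed as stated, because that bound is simply false for some $k$. Take $a=12$ and $k=2$: every $n=6p$ with $p>3$ prime satisfies $\sigma(6p)=12(p+1)=2\cdot 6p+12$, so there are $\sim x/(6\log x)$ solutions with quotient exactly $2$. More generally, whenever some $m$ has $\sigma(m)=a$, the single value $k=a/m$ already carries $\gg x/(m\log x)$ solutions $n=mp$ (these are precisely the \emph{regular} solutions of $\sigma(n)=kn+a$), which for bounded $m$ is $\asymp x/\log x$. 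The same thing happens at $k=1$ when $a=1$: every prime $n=p$ is a solution. So the saving you need cannot be obtained one $k$ at a time.

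There is a related confusion in your ``main engine.'' Once $k$ is fixed and you write $n=Pm$ with $P=P(n)\nmid m$, the relation $\sigma(m)(P+1)=kPm+a$ rearranges to
\[
P\bigl(\sigma(m)-km\bigr)=a-\sigma(m),
\]
so $P$ is \emph{determined} by $m$, not merely placed in a residue class; Brun--Titchmarsh is not the relevant tool. And in the degenerate case $\sigma(m)=km$ you recover exactly the regular families above, where $P$ is unrestricted. (Separately, the claim that $s(n)=a$ has $O_\epsilon(x^\epsilon)$ solutions ``by standard divisor-bound arguments'' is not a known result; the best uniform bound available in this paper is $x^{3/5+o(1)}$, which is of course still harmless here.)

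The paper avoids all of this by \emph{not} splitting on $k$. It uses only the consequence $p\mid \sigma(n)-a$ for $p=P(n)$, which with $n=pm$ reads $\sigma(m)\equiv a\pmod p$. When $p>x^{1/2}\log x$ one has $\sigma(m)<p$, so $\sigma(m)$ is a single fixed value; the uniform bound $\#\{m\le y:\sigma(m)=c\}\le y^{1-(1+o(1))\log_3 y/\log_2 y}$ (the $\sigma$-analogue of Pomerance's ``popular values'' theorem for $\varphi$) then controls the count after summing over dyadic ranges of $p$. When $p\le x^{1/2}\log x$, one peels off the second-largest prime $q=P(m)$, writes $m=uq$, and shows the congruence determines $q$ from $(u,p)$; counting pairs $(u,p)$ with $up<x/\log x$ finishes. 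The key input you are missing is this popular-values bound for $\sigma$; Brun--Titchmarsh does not enter.
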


We remark that Corollary 3 in \cite{pom75} appears to give a uniform upper bound, but the dependence on $a$ is suppressed in the notation.

\subsection*{Notation} Throughout this paper, $n$ and $m$ denote positive integers and $\ell$, $p$, and $q$ are primes. We let $P(n)$ and $P^{-}(n)$ denote the largest and smallest prime factors of $n$, respectively, and we let $\rad(n) = \prod_{p \mid n} p$. We write $\log_k{x}$ for the $k$th iterate of the natural logarithm function at $x$.

\section{The preimages of very sparse sets: Proof of Theorem \ref{thm:preimageofs}}
\label{sec:sparse}





By replacing $\epsilon(x)$ with $\max\{\epsilon(x),1/\log\log{x}\}$, we can assume that $x^{\epsilon(x)} \geq x^{1/\log\log{x}}$. We start by introducing a set $\E$ of ``exceptional preimages.'' We let $\E$ be the set of $n \leq x$ such that at least one of the following holds:
\begin{enumerate}
\item[(a)] $n$ has no prime factor in $(1,\log{x}]$,
\item[(b)] $n$ has a divisor in $(x^{1/2-10\epsilon(x)},x^{1/2+10\epsilon(x)})$,
\item[(c)] $n$ has a squarefull part exceeding $x^{2\epsilon(x)}$,
\item[(d)] $n \leq \sqrt{x}$.
\end{enumerate}

By a simple sieve (inclusion-exclusion), there are $\ll x/\log \log x$ numbers $n \leq x$ satisfying (a). Known results on the distribution of divisors (see, e.g., the sharp result of Ford \cite[Theorem 1]{ford08}) imply that the number of $n \leq x$ satisfying (b) is $o(x)$. The number of $n \leq x$ satisfying (c) is $\ll x^{1-\epsilon(x)}$, which is also $o(x)$. Finally, the number of $n$ satisfying (d) is trivially $O(x^{1/2})$. We conclude that $\#\E = o(x)$. Thus, for the sake of proving the theorem, it suffices to bound the number of non-exceptional $n$ with $s(n) \in \A$.

We will show that for each $a \in \A$, the number of non-exceptional preimages of $a$ is $\ll x^{1/2-9\epsilon(x)}$. Since $\#\A \le x^{1/2+\epsilon(x)}$, the theorem follows.

Let $a \in \A$, and let $n$ be a non-exceptional preimage of $a$.
Write $n = de$, where $d$ is the largest divisor of $n$ not exceeding $\sqrt{x}$.
Since $n$ is non-exceptional, $d \leq x^{1/2-10\epsilon(x)}$.

Since $n > \sqrt{x}$, we have $e > 1$.
Let $p$ be any prime divisor of $e$. Then $dp \mid n$. By the choice of $d$, we have $dp > \sqrt{x}$, and since $n$ is non-exceptional, \begin{equation}\label{eq:dpbound} dp \geq x^{1/2+10\epsilon(x)}.\end{equation} Thus,
\begin{equation}\label{eq:smallpebound} p \geq x^{1/2+10\epsilon(x)}/d \geq x^{20\epsilon(x)}.\end{equation} If $p$ divides both $d$ and $e$, then $n$ has squarefull part at least $p^2 \geq x^{40\epsilon(x)}$, contradicting that $n$ is non-exceptional. Hence, $\gcd(d,e)=1$.

Observe that \begin{align*} \frac{\sigma(e)}{e} &= \prod_{p^k || e} \left(1+ \frac{1}{p} + \cdots + \frac{1}{p^k}\right) \leq \prod_{p | e} \left(1+\frac{1}{p-1}\right) \\
            &\leq \exp\left(\sum_{p|e} \frac{1}{p-1}\right) \leq \exp\left(\frac{\omega(e)}{P^{-}(e)-1}\right) \leq \exp\left(\frac{2\omega(e)}{P^{-}(e)}\right),\end{align*}
where $\omega(e)$ is the number of distinct primes dividing $e$.
But $\omega(e) \leq \log{e} / \log{2} \leq \log{x}/\log{2}$ while, from \eqref{eq:smallpebound}, $P^{-}(e) \geq x^{20\epsilon(x)} \gg (\log x)^2$. So,
$\exp\left(\frac{2\omega(e)}{P^{-}(e)}\right) =1 + O\left(\frac{\log x}{P^{-}(e)}\right)$, and hence
$$\frac{s(e)}{e} = \frac{\sigma(e)}{e}-1  \ll \frac{\log{x}}{P^{-}(e)}.$$
Keeping in mind that $de = n\le x$, we deduce that
\begin{equation}\label{eq:seinequality} s(e) \ll e \frac{\log{x}}{P^{-}(e)} \le \frac{x \log{x}}{d\cdot P^{-}(e)} \le x^{1/2-10\epsilon(x)} \log{x}; \end{equation}
here the last inequality follows from \eqref{eq:dpbound}.

Since $\gcd(d,e)=1$, we also have that
\begin{align}\label{**} a = s(de) = \sigma(d) s(e) + s(d) e.\end{align}
So if $g = \gcd(\sigma(d),s(d))$, then $g \mid a$, and
$a/g = (\sigma(d)/g) s(e) + (s(d)/g) e.$
Hence,
$$s(e) (\sigma(d)/g) \equiv a/g \pmod{s(d)/g}.$$
Since $\sigma(d)/g$ and $s(d)/g$ are relatively prime, given $d$ we see that $s(e)$ lies in a uniquely determined residue class modulo $s(d)/g$. Combined with \eqref{eq:seinequality}, we deduce that the number of possibilities for $s(e)$ is $\ll 1 + x^{1/2-10\epsilon(x)} \log x\cdot g/s(d)$. Moreover, $s(d) \geq d/P^{-}(d) \ge d/\log x$. So the number of choices for $s(e)$, given $d$, is \[ \ll 1 + x^{1/2-10\epsilon(x)} (\log x)^2g/d.\] 
 From \eqref{**}, $d$ and $s(e)$ determine $e$, and hence this last displayed
quantity is also a bound on the number of possibilities for $n=de$, given $d$. 

We now sum on possible values of $g$ and $d$. Each $d$ under consideration has the form $gh$, where $h\leq x^{1/2-10\epsilon(x)}/g.$ Thinking of $g$ as fixed and summing on $d=gh$ gives a bound of
$\ll (\log{x})^3 x^{1/2-10\epsilon(x)}.$
Summing on the $\tau(a)$ divisors $g$ of $a$ bounds the number of possibilities for $n$ by $\ll \tau(a) (\log{x})^3 x^{1/2-10\epsilon(x)}.$
By the maximal order of the divisor function, $\tau(a) \leq x^{0.7/\log\log{x}}$ for large $x$. Since $x^{0.7/\log\log{x}} (\log{x})^3 < x^{1/\log\log{x}} \leq x^{\epsilon(x)}$ for large $x$, we see that the number of $n$ that arise in this way is
$\ll x^{1/2-9\epsilon(x)},$ as claimed.

\section{Disproof of Hypothesis \ref{assert:quad}: Proof of Theorem \ref{thm:2ndmain}}
\label{sec:assertion}

The following can be deduced from \cite[Theorem 2.1]{agp}.

\begin{thm}\label{thm:agp} For each $\epsilon>0$ and number $x$ sufficiently large depending on $\epsilon$,
there is a finite set $\{m_1,m_2,\dots,m_t\}$ of integers, where $t$ depends only on $\epsilon$ and
each $m_i>\log x$, with the following property.
 If $m \leq x^{5/12 - \epsilon}$  and  $m$ is not divisible by any of $m_1,...,m_{t}$, then for each integer
 $u$ coprime to $m$,  there are $\gg_\epsilon \frac{x}{\varphi(m) \log x}$ primes $p\le x$ with
 $p\equiv u \pmod{m}$. \end{thm}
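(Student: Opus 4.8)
The plan is to read the desired statement off from \cite[Theorem 2.1]{agp} almost verbatim, the only genuine work being to match the shapes of the two conclusions. Recall that the Alford--Granville--Pomerance result provides, for each fixed $\epsilon>0$ and all large $x$, a set of ``exceptional'' moduli --- bounded in number by a constant depending only on $\epsilon$, and each exceeding $\log x$ --- which captures all possible failures of equidistribution coming from Landau--Page (Siegel) exceptional real characters. For every modulus $m\le x^{5/12-\epsilon}$ that is \emph{not} divisible by any of these exceptional moduli, and every residue class $u$ coprime to $m$, their theorem furnishes a lower bound of the expected order $x/\varphi(m)$ for a prime-counting function (weighted by $\log$) over the progression $u \bmod m$. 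The entire hard analytic input --- reaching level of distribution $5/12$ for individual moduli and controlling the number of exceptional moduli via the scarcity of exceptional characters --- is already packaged inside that theorem.

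With this in hand, I would set $t=t(\epsilon)$ equal to the AGP bound on the number of exceptional moduli and take $m_1,\dots,m_t$ to be those moduli; if in a given instance AGP produces fewer than $t$, one may harmlessly adjoin additional integers exceeding $\log x$, since enlarging the excluded set only strengthens the hypothesis of the theorem. By construction $t$ depends only on $\epsilon$, and each $m_i>\log x$. It then remains to convert the AGP lower bound into a lower bound for the honest count $\pi(x;m,u)=\#\{p\le x:\ p\equiv u\pmod m\}$ of the stated order $x/(\varphi(m)\log x)$.

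The conversion is where I would spend a sentence of care. Writing $\theta(x;m,u)=\sum_{p\le x,\,p\equiv u\,(m)}\log p$, the AGP bound (after discarding prime powers, which number $O(\sqrt x)$ and so contribute at most $O(\sqrt x\log x)$) gives $\theta(x;m,u)\gg_\epsilon x/\varphi(m)$; this is legitimate because $\varphi(m)\le m\le x^{5/12-\epsilon}$ forces $x/\varphi(m)\ge x^{7/12+\epsilon}$, which dominates the $O(\sqrt x\log x)$ error since $7/12>1/2$. The trivial inequality $\theta(x;m,u)\le (\log x)\,\pi(x;m,u)$ then yields
\[
\pi(x;m,u)\ \ge\ \frac{\theta(x;m,u)}{\log x}\ \gg_\epsilon\ \frac{x}{\varphi(m)\log x},
\]
which is exactly the claimed bound (and incidentally explains the $\log x$ appearing in the denominator of the statement).

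The upshot is that there is no substantive obstacle on our side: the deduction is pure bookkeeping, its only content being to confirm that the exponent $5/12-\epsilon$, the $\epsilon$-dependence of $t$, the requirement $m_i>\log x$, and the order of the main term all line up with \cite[Theorem 2.1]{agp}, as the computation above verifies. If the cited theorem is already phrased directly in terms of $\pi(x;m,u)$ with the order $\pi(x)/\varphi(m)$, then the conversion step is replaced by the trivial observation that $\pi(x)\gg x/\log x$, and the conclusion is immediate.
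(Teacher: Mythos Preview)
Your proposal is correct and matches the paper's approach: the paper does not give a proof at all but simply states that the theorem ``can be deduced from \cite[Theorem 2.1]{agp},'' and your write-up supplies exactly the routine bookkeeping needed for that deduction. There is nothing to add.
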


We prove the following result, which generalizes ideas of Prachar \cite{prachar} and
Erd\H os \cite{E36}.

\begin{thm}\label{thm:generalprachar} There is a positive absolute constant $c$ such that, for all pairs of integers $a, b$ with $a\ne0,b>0$, there are infinitely many integers $k$ with more than $\exp(c \log k/\log \log k)$ representations as $(bp+a)(bq+a)$ with $p, q$ primes. \end{thm}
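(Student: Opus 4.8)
The plan is to marry a Prachar-type lower bound on the number of shifted-prime divisors of a highly composite integer to an Erd\H os-type ``multiplication table'' concentration argument. Fix a large parameter $y$ and let $L$ be the product of the primes $\ell\le y$ with $\ell\nmid b$, after deleting also the finitely many exceptional moduli attached to Theorem~\ref{thm:agp}. Then $L$ is squarefree and coprime to $b$, with $\omega(L)=\pi(y)+O_b(1)$, $\tau(L)=2^{\omega(L)}$, and $\omega(L)=(1+o(1))\log L/\log\log L$. By choosing the residue of $L$ modulo $b$ appropriately I can also arrange that a positive proportion of the divisors of $L$ lie in the class $a\pmod b$, so that $(d-a)/b$ is a genuine integer for many $d\mid L$.

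The technical heart of the proof, and the step I expect to be the main obstacle, is a lower bound for the number of divisors $d\mid L$ with $d\equiv a\pmod b$ and $(d-a)/b$ prime: I claim there are at least $\tau(L)^{1-o(1)}$ of them. Writing $\mathcal{S}=\{bp+a:p\text{ prime}\}\cap\{d:d\mid L\}$, this reads $|\mathcal{S}|\ge \tau(L)^{1-o(1)}$. This is precisely the point at which the ideas of Prachar~\cite{prachar} and Erd\H os~\cite{E36} must be generalized, and where Theorem~\ref{thm:agp} is used: primes $p$ for which $bp+a$ is a squarefree $y$-smooth divisor of $L$ are to be produced from the uniform lower bound $\gg x/(\varphi(d)\log x)$ for primes in the progressions attached to the moduli $d\mid L$ with $d\le x^{5/12-\epsilon}$, which is exactly what forces the exponent $5/12$ and yields the required uniformity in $a$ and $b$. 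Since the prime values are about $1/\log L$ dense among the $\tau(L)$ divisors while $\log L=\tau(L)^{o(1)}$, the natural (essentially optimal) target is $|\mathcal{S}|\asymp \tau(L)/\log L=\tau(L)^{1-o(1)}$; the numerology below shows that anything weaker than $\tau(L)^{1-o(1)}$ would not suffice, so the strength of this count is what the whole argument rests on.

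Granting the count, the rest is concentration. Because $L$ is squarefree, any product $(bp+a)(bq+a)$ with $bp+a,\,bq+a\in\mathcal{S}$ divides $L^2$, hence takes at most $\tau(L^2)=3^{\omega(L)}$ distinct values, whereas there are $|\mathcal{S}|^2\ge\tau(L)^{2-o(1)}=4^{(1-o(1))\omega(L)}$ ordered pairs to distribute among them. By the pigeonhole principle some integer $k\mid L^2$ is represented as $(bp+a)(bq+a)$ at least
\[
\frac{|\mathcal{S}|^2}{\tau(L^2)}\;\ge\;\Big(\tfrac43\Big)^{(1-o(1))\,\omega(L)}
\]
times. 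Finally $k\le L^2$ gives $\log k\le 2\log L$, whence $\omega(L)\ge(1-o(1))\tfrac12\log k/\log\log k$, so the number of representations is at least $\exp\big((1-o(1))\tfrac12\log\tfrac43\cdot\log k/\log\log k\big)$. As $\tfrac12\log\tfrac43=0.1438\ldots>\tfrac17$, the constant $c=\tfrac17$ is admissible once $y$ (and hence $L$) is large; letting $y\to\infty$ produces arbitrarily large $L$ and therefore infinitely many such $k$.
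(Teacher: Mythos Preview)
Your plan hinges on the claim $|\mathcal{S}|\ge\tau(L)^{1-o(1)}$, and this is where there is a real gap. Theorem~\ref{thm:agp} gives you, for each modulus $d\mid L$ with $d\le x^{5/12-\epsilon}$, many primes $p\le x$ with $d\mid bp+a$; it gives no control whatsoever on the cofactor $(bp+a)/d$. To conclude that $bp+a$ is itself a divisor of $L$ you would need that cofactor to be squarefree, $y$-smooth, and coprime to $d$ --- in other words you need $bp+a$ to be entirely $y$-smooth and squarefree, which is a statement about smooth shifted primes, not about primes in progressions. Nothing in your sketch bridges that gap, and it is not a consequence of Theorem~\ref{thm:agp} as stated. (There is also a secondary defect: if $\gcd(a,b)>1$ then every $bp+a$ shares a prime with $b$ and hence cannot divide your $L$, which you made coprime to $b$; the argument would need reworking in that case.)

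The paper's proof avoids this obstacle by running the pigeonhole over \emph{multiples} rather than \emph{divisors}. It never asks that $bp+a$ divide anything. With $M=x^{1/3+o(1)}$ the product of small primes, for each $d\mid M$ one application of Theorem~\ref{thm:agp} produces $\gg x/(\varphi(d)\log x)$ primes $p\le x$ with $d\mid bp+a$, and a second application produces $\gg x\prod_{\ell\mid d}(\ell-2)/(\varphi(M)\log x)$ primes $q\le x$ with $\gcd(bq+a,M)$ \emph{exactly} $M/d$. Each resulting product $(bp+a)(bq+a)$ is then a multiple of $M$ lying below $\ll x^2$, and the exact-gcd condition on $q$ makes the pairs attached to distinct $d$ disjoint. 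Summing over $d\mid M$ gives $\gg x^2\tau(M)/(M\log^2 x)$ pairs landing among the $\ll x^2/M$ multiples of $M$ in that range, so some value receives $\gg\tau(M)/\log^2 x=2^{(1/3+o(1))\log x/\log\log x}$ representations. The moral: forcing the product $(bp+a)(bq+a)$ to be a multiple of $M$ is exactly what two applications of Theorem~\ref{thm:agp} deliver directly; forcing each factor separately to be a divisor of $L$, as you propose, is a much stronger requirement and needs tools beyond what the paper invokes.
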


\begin{proof}[Proof of Theorem \ref{thm:generalprachar}] We fix a choice for $a, b$ and let $v=\max\{|a|,b\}$.
We choose $x,\epsilon$ in Theorem \ref{thm:agp} with $x> e^v$ and $\epsilon=\frac1{13}$.
Let $\E$ be the smallest set of primes such that each exceptional $m_i$ from Theorem \ref{thm:agp} has a prime factor in $\E$.
Let $M$ be the product of all odd primes $\leq \frac{1}{3} \log x$ that are not in $\E$ and do not divide $ab$.
Since $x > e^{v}$, it follows that $|a|, b< \log x$, so the number of prime factors of $ab$ is $\ll \log \log x$.
We have $\omega(M) \sim \frac{1}{3} \log x/\log \log x$ and $M = x^{1/3 + o(1)}.$
For each $d \mid M$, let $r_d$ be the solution to $br_d + a \equiv 0 \pmod{d}$.
Since $a, b$ are coprime to $d$, we must have $r_d$ coprime to $d$ as well.
Consider the primes $p \leq x$ with $bp+a \equiv 0 \pmod{d}.$  These are the primes
 $p \equiv r_d \pmod{d}$.  By Theorem \ref{thm:agp} there are $\gg \frac{x}{\varphi(d) \log x}$ such $p$. 

Now also look at primes $q \leq x$ with $(bq+a, M) = \frac{M}{d}.$ In order that $r$ mod $M$ be a coprime residue class with $(br+a,M)=\frac{M}{d}$, it is necessary and sufficient that
\[ r \equiv -a/b\pmod{\ell} \quad\text{for all $\ell \mid \frac{M}{d}$}, \quad\text{while}\quad r\not\equiv 0,-a/b \pmod{\ell}\quad\text{for all $\ell \mid d$}. \]
There are precisely $\prod_{\ell \mid d} (\ell-2)$ such residue classes $r\bmod{M}$, and so the number of primes $q\le x$ belonging to one of these classes is
\[ \gg \frac{x}{\log x} \frac{\prod_{\ell \mid d} (\ell-2)}{\phi(M)}. \]
The number of pairs $p, q$ is therefore $$\gg \frac{x}{\varphi(d) \log x} \cdot \frac{x \prod_{\ell \mid d}(\ell-2)}{\varphi(M) \log x} = \frac{x^2}{\log^2 x}\cdot  \frac{1}{\phi(M)} \prod_{\ell \mid d} \frac{\ell-2}{\ell-1} \gg \frac{x^2}{M \log^2 x}.$$ Because of the condition $(bq+a, M) = \frac{M}{d}$, different values of $d$ correspond to different values of $q$. Thus, we must have $$\gg \frac{x^2 \tau(M)}{M \log^2 x}$$ pairs $p, q$ running over all $d$'s. Map each pair $p, q$ to $(bp+a)(bq+a) \ll v^2 x^2.$ The number of integers $\ll v^2 x^2$ divisible by $M$ is $\ll \frac{v^2 x^2}{M}$. By the Pigeonhole Principle, there exists some $k \ll v^2 x^2$ with $\gg \frac{x^2 \tau(M)}{M \log^2 x}/\frac{v^2 x^2}{M}$ representations as $(bp+a)(bq+a),$ which is
 \begin{align*}
  \gg \frac{\tau(M)}{v^2 \log^2 x} &> \frac{\tau(M)}{\log^4 x}= \frac{2^{\omega(M)}}{\log^4 x} = \frac{2^{(\frac{1}{3} + o(1))(\log x/ \log \log x)}}{\log^4 x}
  = 2^{(\frac{1}{3} + o(1)) \log x/\log \log x} \\
  &= 2^{(\frac{1}{6} + o(1)) \log(v^2 x^2)/\log \log (v^2 x^2)} \geq 2^{(\frac{1}{6} + o(1)) \log k/\log \log k}.
 \end{align*}
 Since $x$ can be arbitrarily large, this argument produces infinitely many integers $k$ with at least
 $\exp(c\log k/\log\log k)$ representations as $(bp+a)(bp+q)$ with $p,q$ primes, where $c=1/10$.
 \end{proof}

\begin{remark} The best we can do using Theorem \ref{thm:agp} would be to replace $\frac{1}{3}$ with any
number smaller than $\frac{5}{12}$, which gives the result for any number $c<(5/24)\log 2$.  In particular, $c=1/7$ works. \end{remark}

\begin{proof}[Proof of Theorem \ref{thm:2ndmain}] We may assume that $0 < \epsilon < 1$. It is well-known that the values $s(n)/n$ are dense in $(0,\infty)$. Thus, we may fix an integer $n_0>1$ with \[ s(n_0)/n_0 \in \left(\alpha^{-1}\left(1-\frac{1}{2}\epsilon\right),\alpha^{-1}\left(1+\frac{1}{2}\epsilon\right)\right).\] If $p$ and $q$ are distinct primes not dividing $n_0$, then
\begin{align*} s(n_0 pq) &= \sigma(n_0)(p+1)(q+1) - n_0 pq \\
&=s(n_0) pq + \sigma(n_0) (p+q+1), \end{align*}
so that
\begin{equation}\label{eq:analogue} s(n_0) s(n_0 pq) = (s(n_0)p + \sigma(n_0))(s(n_0)q + \sigma(n_0)) +s(n_0)\sigma(n_0)-\sigma(n_0)^2.\end{equation}
By Theorem \ref{thm:generalprachar}, there are infinitely many integers $k$ having more than $\exp(c \log k/\log \log k)$ representations in the form \[ k = (s(n_0)p+\sigma(n_0))(s(n_0)q+\sigma(n_0)),\] with $p,q$ distinct and
\[ p, q > \max\{n_0,\sigma(n_0)/n_0 \cdot (s(n_0)/n_0\cdot \epsilon/12)^{-1}\}.\] (These latter conditions on $p$ and $q$ exclude only $O(1)$ representations of $k$.) Letting $k$ be a large integer of this kind, define $m$ in terms of $k$ by
\[ m = \frac{k + s(n_0)\sigma(n_0)-\sigma(n_0)^2}{s(n_0)}.\] Then $m < k$ and $m$ has at least $\exp(c\log m/\log \log m)$ representations in the form $s(n_0 pq)$. Moreover,
\[ m = s(n_0 pq) \ge s(n_0) pq \ge \left(n_0 \alpha^{-1}\left(1-\frac{1}{2}\epsilon\right)\right) pq = n_0 pq \cdot \alpha^{-1}\left(1-\frac{1}{2}\epsilon\right). \]
Thus,
\[ n_0 pq \le \alpha m \left(1-\frac{1}{2}\epsilon\right)^{-1} < (1+\epsilon)\alpha m. \]
On the other hand, using the bounds on $p, q$,
\begin{align*} \frac{s(n_0 pq)}{n_0 pq} &= \frac{s(n_0)}{n_0} + \frac{\sigma(n_0)}{n_0}\left(\frac{1}{p} + \frac{1}{q} + \frac{1}{pq}\right) \\
&< \frac{s(n_0)}{n_0} + \frac{s(n_0)}{n_0} \cdot \frac{\epsilon}{4} < \alpha^{-1} \left(1+\frac{7}{8}\epsilon\right). \end{align*}
Rearranging,
\[ n_0 pq > \alpha \cdot s(n_0 pq) (1+7\epsilon/8)^{-1} > (1-\epsilon)\alpha m.\]
Thus, $m$ has at least $\exp(c\log m/\log \log m)$ preimages $n=n_0 pq$ in $((1-\epsilon)\alpha m, (1+\epsilon)\alpha m)$.
 \end{proof}

\begin{remark} In \cite{firstfunction}, the second author writes ``it seems difficult to show there are infinitely many even $n$ with $\#s^{-1}(n) \ge 3$.'' This can now be shown. Following the above construction with $n_0=2$, one obtains infinitely many even $m$ with more than $\exp(c\log m/\log \log m)$ $s$-preimages of the form $2pq$.
\end{remark}

\section{Solutions to $\sigma(n) = kn + a$}\label{sec:tamsconjecture} Solving $s(n)=a$ is of course the same as solving $\sigma(n)=n+a$. In \cite{pom75}, \cite{app}, and \cite{polpom0} the authors study the more general equation
\begin{equation}\label{eq:appequation} \sigma(n)=kn+a,\end{equation}
for given $k$ and $a$. In this context, an integer $n$ is called a \textit{regular} solution to \eqref{eq:appequation} if
$$n = pm, \ \mathrm{where} \ p \nmid m, \ \sigma(m)/m=k, \ \mathrm{and} \ \sigma(m) = a.$$
All other solutions are called \textit{sporadic}. If there are any regular solutions at all, then there are $\gg_{a,k} x/\log x$ regular solutions up to $x$, for large $x$. In Theorem 1 of \cite{app}, it is shown that the count of sporadic solutions is much smaller: For any integer $k\ge 0$ and any integer $a$ with $|a| \le x^{1/4}$, the count of sporadic solutions up to $x$ is at most $x^{1/2+o(1)}$, as $x\to\infty$, uniformly in $k$ and $a$.

The authors of \cite{app} claim it is plausible that the upper bound $x^{1/2+o(1)}$ can be replaced with a bounded power of $\log x$, even in the wider range $|a| \leq x/2$. However, this is provably false in the case $k=0$, 
as shown by the method in Erd\H os \cite{E35}.  And it is provably false in the case $k=1$ since there are
infinitely many positive integers $a$ with $\gg a/\log^2a$ representations as $p+q+1$ where
$p,q$ are unequal primes (so that $\sigma(pq)=pq+a$).
The first two authors of the present paper
go on to state this as a conjecture in \cite[Conjecture 2.4]{polpom}, taking into account that $k$ should not be 0 or 1.

\begin{conjecture}\label{conj:tams} Let $k \geq 2$.  Let $x \geq 3$ and $a \in \mathbb{Z}$ with $|a| \leq \frac{x}{2}$. The number of solutions $n \leq x$ to $\sigma(n) = kn+a$ is $\ll (\log x)^C$, where both the implied constant and $C$ are absolute constants. \end{conjecture}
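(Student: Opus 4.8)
The plan is to adapt the divisor-splitting technique of \S\ref{sec:sparse} to the equation $\sigma(n)=kn+a$, exploiting that $\sigma$ is multiplicative. After discarding an exceptional set of $n\le x$ (those that are small, have no small prime factor, have a divisor in a dyadic window near $\sqrt{x}$, or have a large squarefull part) exactly as in \S\ref{sec:sparse}, one would write each surviving $n=de$ with $\gcd(d,e)=1$, where $d$ is the largest divisor of $n$ below $\sqrt{x}$ and $P^{-}(e)$ is large. Multiplicativity turns the defining equation into
\begin{equation}\label{eq:prodform} \sigma(d)\,\sigma(e)=kde+a. \end{equation}
Because the prime factors of $e$ are large, $\sigma(e)=e\bigl(1+O(\log x/P^{-}(e))\bigr)$, so $s(e)=\sigma(e)-e$ is small; substituting $\sigma(e)=e+s(e)$ into \eqref{eq:prodform} and solving for $e$ gives
\[ e=\frac{a-\sigma(d)\,s(e)}{\sigma(d)-kd}. \]
Thus, for each admissible $d$ and each admissible small value of $s(e)$, the pair $\bigl(d,s(e)\bigr)$ determines $e$, hence $n$.

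The decisive new feature is that, in contrast to the additive identity $a=\sigma(d)s(e)+s(d)e$ driving the $k=1$ analysis of \S\ref{sec:sparse}, equation \eqref{eq:prodform} is genuinely \emph{multiplicative} in the two coprime blocks. For $k\ge 2$ this is a real structural gain: a product of two large shifted primes $(p+1)(q+1)$ stays below $2pq$, so $n$ can solve $\sigma(n)=kn+a$ only if the ``small'' block $d$ is already quite abundant, with $\sigma(d)/d$ lying just below $k+a/n$. Quantitatively, for $d$ with $\sigma(d)\neq kd$ (the rare $k$-perfect $d$ being handled separately), $|\sigma(d)-kd|$ is a positive integer, and the constraint that $e$ be integral pins $s(e)$ to a single residue class, forcing $\ll 1+x^{1/2-10\epsilon}\log x/|\sigma(d)-kd|$ choices of $e$ per $d$. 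Carrying the sum over $d\le\sqrt{x}$ through as in \S\ref{sec:sparse} reproduces the bound $x^{1/2+o(1)}$ of \cite{app}; to reach $(\log x)^C$ one must instead show that only a poly-logarithmic set of $d$ can be \emph{admissible} at all.

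This last reduction is where I expect the real obstruction. Admissibility of $d$ demands that $\sigma(d)/d$ approximate $k$ to within $O(1/d)$, i.e., that $\sigma(d)-kd$ equal a fixed small integer $t$; but counting $d\le\sqrt{x}$ with $\sigma(d)=kd-t$ is \emph{the very same problem one level down}, so the naive recursion is circular. Worse, Theorem \ref{thm:generalprachar} shows that product-type targets assembled from shifted primes can be hit superpolylogarithmically often---as many as $\exp(c\log N/\log\log N)$ times for suitable $N$---so any successful argument must use the precise arithmetic shape forced by $\sigma(n)=kn+a$, together with the exclusion of $k\in\{0,1\}$, to rule out such concentration for exactly those targets of the form $kn+a$. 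Making this anti-concentration statement for products of shifted primes unconditional and uniform in both $k$ and $a$ seems to require genuinely new input beyond the sieve and Chebotarev-type tools used in this paper; absent it, the factorization method stalls at $x^{1/2+o(1)}$, which is precisely why the assertion is recorded here only as a conjecture.
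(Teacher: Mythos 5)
Your proposal does not prove the statement---and it could not, because the statement is false, and the paper itself disproves it. Conjecture \ref{conj:tams} appears in \S\ref{sec:tamsconjecture} only as a quotation of \cite[Conjecture 2.4]{polpom}; the content of that section is a refutation, not a proof. The (unlabeled) proposition there shows that for \emph{every} positive integer $k$ and every $C>0$ there are infinitely many positive integers $a$ such that $\sigma(n)=kn+a$ has more than $(\log a)^C$ solutions $n\le a$ (taking $x=2a$, so that $|a|\le x/2$), and these solutions can even be taken sporadic and not of the form $mp$ or $mpq$ with $m$ a $k$-perfect number. So no refinement of the divisor-splitting method can succeed, and the honest conclusion of your final paragraph---that the method stalls at $x^{1/2+o(1)}$---needed to be pushed one step further: the obstruction you identify is not a gap in the toolbox but an actual counterexample.

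Indeed, you came within one observation of the disproof. You correctly note that Theorem \ref{thm:generalprachar} lets product-type targets be hit $\exp(c\log N/\log\log N)$ times, and that any proof would have to ``rule out such concentration for exactly those targets of the form $kn+a$.'' But your hope that the restriction $k\ge 2$ blocks this concentration is misplaced. Fix $n_0$ with $\sigma(n_0)/n_0>2k$ and set $T=\sigma(n_0)-kn_0>0$; for $n=n_0pq$ with $p,q$ distinct primes not dividing $n_0$, the paper's identity \eqref{eq:analogue2} gives
\[ T\bigl(\sigma(n)-kn\bigr)=\bigl(Tp+\sigma(n_0)\bigr)\bigl(Tq+\sigma(n_0)\bigr)+T\sigma(n_0)-\sigma(n_0)^2, \]
so Theorem \ref{thm:generalprachar}, applied with parameters $b=T$ and shift $\sigma(n_0)$, produces infinitely many values $a=\sigma(n)-kn$ attained by more than $\exp(c\log a/\log\log a)$ integers $n=n_0pq$. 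The condition $\sigma(n)/n>2k$ forces $n<a/k\le a$, and such $n$ avoid the forms $mp$, $mpq$ with $\sigma(m)=km$ (any such number has $\sigma/n$ ratio at most $2k$). This is exactly the concentration you feared, realized for targets of the form $kn+a$, uniformly in $k$; it kills the $(\log x)^C$ bound for every $k$. The correct repair is not a new anti-concentration input but a weaker conjectured bound: the paper replaces $(\log x)^C$ by $x^{1/2+o(1)}$ for sporadic solutions (Conjecture \ref{conj:revisedtams}) and proves unconditionally an $x^{3/5+o_k(1)}$ bound uniform in $a$ (Theorem \ref{thm:finalprop}), which is the rigorous form of the plateau your factorization method reaches.
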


The statement of Conjecture \ref{conj:tams} in \cite{polpom} inadvertently omits the word ``sporadic,'' which is still needed. However, it turns out that confining attention to sporadic solutions is still not enough. For example, when $k = 3$, there are values of $a$ for which the equation $\sigma(n) - 3n = a$ has at least $x^{{1/2} + o(1)}$ solutions with $n \leq x$ of the form $n = 120pq$. This construction depends on the fact that 120 is 3-perfect, i.e., $\sigma(120)=3\cdot 120$. One might attempt to further salvage Conjecture \ref{conj:tams} by barring constructions like $\sigma(n) - 3n = a$ with $n = 120pq$. However, even with this modification, the upper bound on the number of solutions to $\sigma(n) = kn+a$ given in Conjecture \ref{conj:tams} is too small.

\begin{prop} Let $k$ be a positive integer, and let $C > 0$. There are infinitely many positive integers $a$ for which the equation $\sigma(n)=kn+a$ has more than $(\log{a})^{C}$ solutions $n \le a$ not of the form $m p$ or $m pq$ for any $k$-perfect number $m$.
\end{prop}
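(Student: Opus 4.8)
The plan is to adapt the construction behind Theorems \ref{thm:generalprachar} and \ref{thm:2ndmain}, but to run it through an auxiliary integer $n_0$ that is \emph{not} $k$-perfect. Since $\sigma(n)/n$ is unbounded, I first fix an integer $n_0 > 1$ with $\sigma(n_0)/n_0 \ge k+1$; in particular $\sigma(n_0) > kn_0$, so $n_0$ is not $k$-perfect. Put $t = \sigma(n_0) - kn_0$, so that $t \ge n_0 > 0$. For primes $p,q \nmid n_0$ a direct computation gives $\sigma(n_0 pq) - kn_0 pq = t\,pq + \sigma(n_0)(p+q+1)$, and completing this to a product yields the identity
\[ t\bigl(\sigma(n_0 pq) - k n_0 pq\bigr) = \bigl(tp + \sigma(n_0)\bigr)\bigl(tq + \sigma(n_0)\bigr) - k n_0 \sigma(n_0). \]
This is the general-$k$ analogue of \eqref{eq:analogue}, with $t$ playing the role that $s(n_0)$ played there.

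Next I would apply Theorem \ref{thm:generalprachar}, taking its parameters to be $b = t > 0$ and additive constant $\sigma(n_0) \ne 0$, to produce infinitely many integers $K$, each with more than $\exp(c\log K/\log\log K)$ representations $K = (tp + \sigma(n_0))(tq + \sigma(n_0))$. For such a $K$, set $a = (K - kn_0\sigma(n_0))/t$. Reducing modulo $t$ and using $kn_0 \equiv \sigma(n_0) \pmod t$ gives $K \equiv \sigma(n_0)^2 \equiv kn_0\sigma(n_0) \pmod t$, so $a$ is an integer, positive for large $K$. The identity above shows that each representation yields a solution $n = n_0 pq$ of $\sigma(n) = kn + a$, and distinct unordered pairs $\{p,q\}$ give distinct $n$. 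The choice $\sigma(n_0) \ge (k+1)n_0$ forces $t \ge n_0$, so the coefficient $t(t-n_0)$ of $pq$ in $K - t n_0 pq - kn_0\sigma(n_0)$ is nonnegative while the remaining terms are positive; hence $n = n_0 pq \le a$ for every representation. Since $a \asymp K$ and $\exp(c\log K/\log\log K)$ outgrows any fixed power of $\log K$, for large $K$ there are more than $(\log a)^C$ such solutions $n$, and letting $K \to \infty$ yields infinitely many $a$.

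The crux is to verify that these solutions are not of the barred forms, i.e.\ that $n = n_0 pq$ equals $mp'$ or $mp'q'$ with $m$ $k$-perfect only for a bounded number of exceptions. I would first discard the $O(1)$ representations with $p = q$ or with $p$ or $q$ dividing $n_0$, restricting to $p \ne q$ large and coprime to $n_0$, and then run through the ways a prime or product of two primes can be split off from $n_0 pq$. Splitting off $p$ (or $q$) forces $\sigma(n_0 q) = kn_0 q$, i.e.\ $qt = -\sigma(n_0) < 0$, impossible; splitting off $pq$ forces $n_0$ itself $k$-perfect, i.e.\ $t = 0$, impossible. The remaining splittings remove a prime factor $\ell$ of $n_0$ and leave one or both large primes inside $m$: one large prime inside $m$ forces a single admissible prime value (at most one bad large prime per $\ell$), while two large primes inside $m$ forces an equation $(\sigma(m') - km')pq + \sigma(m')(p+q+1) = 0$ that a sign estimate rules out for $p,q$ large. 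Each configuration thus contributes only $O(1)$ representations, so subtracting them from the $\exp(c\log K/\log\log K)$ available still leaves more than $(\log a)^C$ solutions of the desired shape. I expect this case analysis — tracking how the large primes are distributed between $m$ and the split-off primes — to be the main technical obstacle, though each case collapses to an easy positivity argument because $t > 0$.
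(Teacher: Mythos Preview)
Your argument is correct and follows the same route as the paper's: fix $n_0$ with $\sigma(n_0)-kn_0>0$, derive the identity (your display is exactly \eqref{eq:analogue2} with $t=T$), invoke Theorem~\ref{thm:generalprachar}, and read off many solutions $n=n_0pq$ with $n\le a$. The only difference is in ruling out the forms $mp$ and $mpq$: the paper chooses $n_0$ with $\sigma(n_0)/n_0>2k$ (stronger than your $\ge k+1$), so that $\sigma(n)/n\ge\sigma(n_0)/n_0>2k$, whereas submultiplicativity of $\sigma(\cdot)/\cdot$ gives $\sigma(mp)/(mp),\,\sigma(mpq)/(mpq)\le k(1+\tfrac12)(1+\tfrac13)=2k$ for any $k$-perfect $m$; this single inequality replaces your entire case analysis and simultaneously yields $n<a/k\le a$.
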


\noindent By considering values of $x=2a$, we see that Conjecture 4.1 fails for every $k$, even after restricting $n$ to sporadic solutions not of the form $mpq$ for a $k$-perfect number $m$.

\begin{proof}[Proof (sketch)] Fix a positive integer $n_0$ with $\sigma(n_0)/n_0 > 2k$. We consider numbers $n = n_0 pq$, where $p, q$ are distinct primes not dividing $n_0$. Then, writing $T=\sigma(n_0)-kn_0$, one finds that \begin{equation}\label{eq:analogue2} T(\sigma(n) - kn) = (Tp + \sigma(n_0))(Tq + \sigma(n_0)) + T\sigma(n_0) - \sigma(n_0)^2. \end{equation}
(This is the analogue of \eqref{eq:analogue}.) Following the proof of Theorem \ref{thm:2ndmain}, we produce infinitely many positive integers $a$ that can be written as $\sigma(n)-kn$ for more than $\exp(c\log a/\log \log a)$ numbers $n=n_0 pq$. All of our $n=n_0 pq$ satisfy $\sigma(n)/n  \ge \sigma(n_0)/n_0 > 2k$, so that $n <  (\sigma(n)-kn)/k = a/k \le a$. Moreover, $n$ does not have the form $mpq$ for a for a $k$-perfect number $m$. Indeed, for any primes $p$ and $q$, both $\sigma(mp)/mp$ and $\sigma(mpq)/mpq$ are bounded above by $(\sigma(m)/m) (1+1/2)(1+1/3)= 2k$, whereas $\sigma(n)/n > 2k$. Since $\exp(c\log a/\log \log a) > (\log a)^{C}$ for large $a$, the proposition follows.
\end{proof}
 Thus, we cannot hope to get the bound of $(\log x)^C$ stated in Conjecture \ref{conj:tams}. Instead, one might make the following conjecture:

\begin{conjecture}\label{conj:revisedtams} Let $k$ be a positive integer. Let $x \geq 3$ and $a \in \mathbb{Z}$ with $|a| \leq x$. The number of sporadic solutions $n \leq x$ to $\sigma(n) = kn+a$ is at most $x^{1/2 + o(1)},$
as $x\to\infty$, uniformly in $k,a$. \end{conjecture}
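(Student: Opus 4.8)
The plan is to prove the bound fiber by fiber, extending the argument of Theorem~\ref{thm:preimageofs} from $k=1$ to general $k$ and from $|a|\le x^{1/4}$ (the range of \cite{app}) to the full range $|a|\le x$. Fix $k\ge 1$ and $a$ with $|a|\le x$, and count sporadic $n\le x$ with $\sigma(n)=kn+a$. First I would dispose of $n$ with a large prime factor. If $P(n)>\sqrt x$ then $P(n)^2>x\ge n$, so $P:=P(n)$ divides $n$ exactly once; writing $n=Pm$ with $P\nmid m$ gives $m<\sqrt x$ and, from $(P+1)\sigma(m)=kPm+a$,
\[ P(\sigma(m)-km)=a-\sigma(m). \]
When $m$ is not $k$-perfect this recovers $P$, hence $n$, from $m$, so there is at most one such $n$ per $m$; when $m$ is $k$-perfect the only solutions are the regular ones $n=Pm$ with $\sigma(m)=a$, which are excluded. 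Hence there are at most $\sqrt x$ sporadic solutions with $P(n)>\sqrt x$, irrespective of any other feature of $n$. It remains to bound the $\sqrt x$-smooth sporadic solutions.

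For the smooth range I would run the decomposition of \S\ref{sec:sparse} in its general-$k$ form: writing $n=de$ with $d$ the largest divisor of $n$ not exceeding $\sqrt x$, coprimality of $d$ and $e$ turns $\sigma(n)=kn+a$ into
\[ (\sigma(d)-kd)\,e+\sigma(d)\,s(e)=a, \]
the analogue of \eqref{**} with $s(d)$ replaced by $\sigma(d)-kd$. When $n$ has no divisor near $\sqrt x$ one gets $P^{-}(e)$ large and hence $s(e)\ll x^{1/2+o(1)}$ as in \eqref{eq:seinequality}; reducing the identity modulo $(\sigma(d)-kd)/g$, with $g=\gcd(\sigma(d),\sigma(d)-kd)\mid a$, confines $s(e)$ to one residue class, after which $d$ and $s(e)$ determine $e$ and so $n$. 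Summing $\ll 1+x^{1/2+o(1)}g/|\sigma(d)-kd|$ over $d$ and over the divisors $g\mid a$ yields $x^{1/2+o(1)}$ per fiber, the point being that $a$ enters only through $\tau(a)=x^{o(1)}$, which is precisely what removes the restriction $|a|\le x^{1/4}$. Two degeneracies must be peeled off first: the $d$ with $\sigma(d)=kd$ (the $k$-perfect $d$, reproducing the regular solutions) and, more delicately for $k\ge 2$, the $d$ with $\sigma(d)-kd$ nonzero but very small, for which the modulus is too small to help and one must argue that such $d$ are too sparse to matter.

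This leaves two families of smooth $n$ outside the scope of the last paragraph, and for these the aggregate estimates of \S\ref{sec:sparse} (which only give $o(x)$) must be upgraded to $x^{1/2+o(1)}$ \emph{within a single fiber}. The very smooth numbers, say $P(n)\le(\log x)^2$, are harmless: there are only $\Psi\!\left(x,(\log x)^2\right)=x^{1/2+o(1)}$ of them altogether. The hard family, which I expect to be the main obstacle, is the numbers satisfying condition (b) of \S\ref{sec:sparse}, namely those with a divisor near $\sqrt x$. Such an $n$ has a balanced coprime factorization $n=d_0e_0$ with $d_0,e_0=x^{1/2+o(1)}$ and equation $\sigma(d_0)\sigma(e_0)=kd_0e_0+a$; now neither $s(d_0)$ nor $s(e_0)$ is small, the congruence mechanism collapses, and fixing $d_0$ leaves the genuinely nonlinear condition $\sigma(d_0)\sigma(e_0)-kd_0e_0=a$ on $e_0$. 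Ford's theorem bounds the count of balanced-divisor $n\le x$ only by $o(x)$, far weaker than the $x^{1/2+o(1)}$ needed per fiber, and I do not see how to count the solutions of this Diophantine condition without new input. Supplying that input—equivalently, controlling $n$ with a divisor near $\sqrt x$ inside a prescribed fiber of $\sigma(n)-kn$—is exactly what stands between the present methods and the conjecture.
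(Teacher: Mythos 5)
You are attempting to prove Conjecture~\ref{conj:revisedtams}, which the paper does not prove: it is stated as an open conjecture, and the authors establish only the weaker Theorem~\ref{thm:finalprop}, with exponent $3/5$ in place of $1/2$ and with uniformity in $a$ but not in $k$. So there is no paper proof to compare against, and your proposal --- as you yourself concede in the final paragraph --- is not a proof either. The gap you name is genuine and is precisely the obstruction: in the proof of Theorem~\ref{thm:preimageofs}, integers $n$ with a divisor near $\sqrt{x}$ are discarded using Ford's theorem, which bounds their \emph{total} number by $o(x)$; that is useless for a per-fiber bound of $x^{1/2+o(1)}$, and no known method controls, inside a single fiber, the solutions of $\sigma(d_0)\sigma(e_0)-kd_0e_0=a$ with $d_0, e_0$ both of size $x^{1/2+o(1)}$. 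Your secondary issue is also unresolved: for $k\ge 2$, the $d$ with $\sigma(d)-kd$ nonzero but very small make your modulus $(\sigma(d)-kd)/g$ too small to be useful, and ``one must argue that such $d$ are too sparse to matter'' is a hope, not an argument --- near-$k$-perfect numbers are not known to be sparse enough, fiber by fiber, for this purpose.

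That said, the parts you do carry out are sound and instructive: the reduction for $P(n)>\sqrt{x}$ (where non-$k$-perfect $m$ determines $P$, and $k$-perfect $m$ reproduces exactly the excluded regular solutions) is correct, as is the identity $(\sigma(d)-kd)e+\sigma(d)s(e)=a$ generalizing \eqref{**}; and your diagnosis of where the method breaks is exactly why the authors state this as a conjecture. It is worth seeing how Theorem~\ref{thm:finalprop} trades the exponent to evade both of your problem cases: when $n_0>x^{3/5}$ all prime factors of $n$ below $P(n)$ are $<x^{1/5}$, so a divisor $d_0\in(x^{2/5},x^{3/5}]$ always exists by a greedy construction --- no Ford-type input is ever needed --- and the equation is reduced modulo $\sigma(d)$ (giving $e$ in a fixed class modulo $\sigma(d)/\gcd(\sigma(d),kd)$) rather than modulo $\sigma(d)-kd$, so the modulus is automatically at least $d/\gcd(\sigma(d),kd)$ with $d>x^{2/5}$ and the near-$k$-perfect degeneracy never arises. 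That substitution is also where uniformity in $k$ is lost, and where the exponent degrades from $1/2$ to $3/5$; closing the remaining distance to $x^{1/2+o(1)}$ uniformly in $k$ is, at present, open.
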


We have the following variant. 

\begin{thm}\label{thm:finalprop} Let $k$ be a positive integer. Let $x \geq 3$ and $a \in \mathbb{Z}$. The number of sporadic solutions $n  \le x$ to $\sigma(n) = kn + a$ is at most $x^{3/5 + o_k(1)}$, as $x \rightarrow \infty$, uniformly in $a$.\end{thm}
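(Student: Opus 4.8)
The plan is to split the sporadic solutions according to the size of $P(n)$, handling $P(n)\ge x^{2/5}$ by a determination argument that produces the main term $x^{3/5}$, and treating the complementary ``smooth'' regime by turning the multiplicative identity for $\sigma$ into a congruence. At the outset I would discard two negligible families: the numbers $n\le \sqrt x$ (trivially $O(x^{1/2})$), and a sparse set of $n$ possessing a prime-power factor larger than $x^{4/5}$ (these number $\ll x^{1-2/5}\le x^{3/5}$, being controlled by $\sum_{p^j>x^{4/5},\,j\ge2}x/p^j$). What remains is accessible through coprime factorizations $n=de$, since any unitary divisor is automatically coprime to its cofactor.

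First I would dispose of $p:=P(n)\ge x^{2/5}$. If $p^2\mid n$ the count is $\ll\sum_{p\ge x^{2/5}}x/p^2\ll x^{3/5}$, so I may assume $p\parallel n$ and write $n=pm$ with $p\nmid m$. Then $(p+1)\sigma(m)=kpm+a$, whence
\begin{equation*}
p=\frac{\sigma(m)-a}{km-\sigma(m)}
\end{equation*}
is uniquely determined by $m$ whenever $\sigma(m)\ne km$; and the excluded case $\sigma(m)=km$ forces $a=\sigma(m)=km$, so that $m$ is $k$-perfect with $\sigma(m)=a$ and $n=pm$ is a \emph{regular} solution, hence not counted. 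Thus $n\mapsto m=n/P(n)$ is injective on the sporadic solutions of this type, and since $m\le x/p\le x^{3/5}$ there are at most $x^{3/5}$ of them, uniformly in $a$. This is the clean part of the argument, and notably the determination of $p$ is insensitive to the size of $a$.

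The remaining solutions are $x^{2/5}$-smooth, and this is the crux. For such $n$ (which I may take to exceed $\sqrt x$) smoothness lets me select a unitary divisor $d\parallel n$ lying in a prescribed window $(D/x^{2/5+o(1)},D]$. Writing $e=n/d$ and using $\gcd(d,e)=1$, the equation reads $\sigma(d)\sigma(e)=kde+a$, and reducing modulo $\sigma(d)$ gives
\begin{equation*}
kde\equiv -a\pmod{\sigma(d)},
\end{equation*}
so $e$ is pinned to a single residue class modulo $\sigma(d)/g$, where $g=\gcd(kd,\sigma(d))$ (which divides $a$ automatically). Since $e\le x/d$, the number of admissible $e$ for a given $d$ is $\ll 1+xg/d^{2}$. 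Summing the ``$+1$'' contributes the number of divisors in the window, while the main term requires bounding averages of $\gcd(kd,\sigma(d))$; balancing the window size against these two contributions is exactly what is designed to yield the exponent $3/5$ together with the $o_k(1)$ dependence.

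The hard part is precisely this smooth regime, and it is where uniformity in $a$ bites. Unlike the case $k=1$ of Theorem~\ref{thm:preimageofs}, where $\sigma(d)-kd=s(d)\gg d/\log x$ stays bounded away from $0$, for $k\ge 2$ the cofactors $d$ with $\sigma(d)$ abnormally close to $kd$—the near-$k$-perfect $d$, and those for which $\gcd(kd,\sigma(d))$ is large—are exactly the ones inflating the main term, and bounding their total contribution uniformly over all $a$ (including the large values $a\asymp x^{1+o(1)}$ that the earlier $x^{1/2+o(1)}$ estimates could not reach) is the essential difficulty. I expect the window argument alone to leave a gap between the smooth threshold and the $x^{2/5}$ cutoff, to be closed by a further peeling of mid-sized prime factors, each again determined by its cofactor as in the large-prime step; the interplay of these estimates is what I anticipate pins the final exponent at $3/5$.
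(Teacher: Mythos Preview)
Your overall architecture—split on the size of $P(n)$, handle the large-prime case by a determination argument, and handle the rest via the congruence $kde\equiv -a\pmod{\sigma(d)}$ for a unitary divisor $d$—matches the paper's. Your single-prime peel for $P(n)\ge x^{2/5}$ is correct and in fact cleaner than what the paper does in that range. The genuine gap is in the complementary ``smooth'' case, and it is exactly where your choice of threshold $x^{2/5}$ fails to deliver the exponent $3/5$.

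Here is the issue. In your smooth case you only know $P(n)<x^{2/5}$, so when you build a divisor $d_0$ of $n$ by adjoining prime factors one at a time, each step can multiply by as much as $x^{2/5}$; hence you can only guarantee $d_0$ lands in a window of ratio $x^{2/5}$, not $x^{2/5+o(1)}$ in the sense of being close to a point. (Your direct claim of a \emph{unitary} divisor in such a window is even less justified, since prime-power parts may be as large as $x^{4/5}$.) The two contributions you identify are bounded by $U\cdot x^{o(1)}$ from the ``$+1$'' terms (with $U$ the window's upper endpoint) and $x^{1+o(1)}/L$ from the main term (with $L$ the lower endpoint). These balance to $x^{(1+\alpha)/2+o(1)}$ when the achievable window ratio is $x^{\alpha}$. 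With $\alpha=2/5$ this is $x^{7/10}$, and even after re-optimizing the threshold the single-prime peel cannot do better than $\alpha=1/3$, giving $x^{2/3+o(1)}$. Your proposed fix—``further peeling of mid-sized prime factors, each again determined by its cofactor''—does not work as stated: once $p=P(n)$ is determined from $m=n/p$, you are already summing over all $m\le x^{1-\alpha}$, and there is no second equation available to determine $P(m)$ from $m/P(m)$.

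What the paper does instead is peel the two largest primes \emph{simultaneously}: writing $n=n_0pq$ with $q=P(n)$, $p=P(n/q)$, and $T=\sigma(n_0)-kn_0$, one has the identity
\[
Ta - T\sigma(n_0)+\sigma(n_0)^2=(Tp+\sigma(n_0))(Tq+\sigma(n_0)),
\]
so that for each $n_0\le x^{3/5}$ the pair $(p,q)$ is determined up to $x^{o(1)}$ choices by the divisor bound (with a short case analysis when the left side vanishes, which turns out to force a regular solution). This is the missing idea: it is a divisor-counting argument, not a determination argument, and it is what pushes the ``non-smooth'' case down to $n_0\le x^{3/5}$ while leaving the smooth case with $p=P(n/q)<x^{1/5}$, hence window ratio $x^{1/5}$, which then balances exactly to $x^{3/5}$. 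The paper also handles the passage from an arbitrary divisor $d_0\in(x^{2/5},x^{3/5}]$ to its unitary hull $d$ by noting that the number of $d\le x$ with $\rad(d)=\rad(d_0)$ is $x^{o(1)}$; you should not expect to locate the unitary divisor directly in the window.
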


Note that the conclusion here is slightly weaker than that of \cite[Theorem 1]{app}: 
we have $x^{3/5+o(1)}$ in place of $x^{1/2+o(1)}$, and the result is not claimed to be uniform in $k$. The advantage of Theorem \ref{thm:finalprop} is that $a$ is no longer restricted to satisfy $|a| \le x^{1/4}$.  However, for all solutions $n\le x$,
we have $|a|=|\sigma(n)-kn|\ll_k x\log\log x$.

\begin{proof} We can assume $a\ne 0$; if $a=0$, we are counting multiply perfect numbers, and Hornfeck and Wirsing \cite{hw57} have shown that the count of such $n\le x$ is $O(x^{\epsilon})$ for any fixed $\epsilon > 0$.

If $n=p$ is a prime solution to $\sigma(n)=kn+a$, then $p(1-k) = a-1$. If $k\ge 2$, this equation determines $p$, and so there is at most one solution with $n$ prime. If $k=1$, then for there to be any solution we must also have $a=1$; but then $n=p$ is a regular solution to $\sigma(n)=kn+a$, not a sporadic solution.

Thus, we may restrict attention to solutions $n$ with at least two prime factors, counted with multiplicity. We write each such $n$ in the form
\[ n = n_0 pq, \quad\text{where}\quad q = P(n), \quad p = P(n/q). \]

We now show that the cases $n_0 \le x^{3/5}$ contribute at most $x^{3/5+o(1)}$ solutions.  To start with, we  suppose that $(pq, n_0) = 1$ and $p \neq q$.

Let $T = \sigma(n_0)-kn_0$. If $T=0$, then
\[ a = \sigma(n)-kn = \sigma(n_0)(p+1)(q+1) - kn_0 pq = kn_0 (p+q+1). \]
Thus, $n_0$ and $p+q+1$ are divisors of $a$. Since $|a|\ll x\log\log x$, there are only $x^{o(1)}$ divisors of $a$, 
and so there are only $x^{o(1)}$ possibilities for $n_0$, as well as only $x^{o(1)}$ possibilities for $q$, given $p$. Since $p \le x^{1/2}$, the case when $T=0$ leads to only $x^{1/2+o(1)}$ possibilities for $n=n_0 pq$.

Now suppose that $T\ne 0$. Then (cf. \eqref{eq:analogue2}),
\[ Ta - T\sigma(n_0) +\sigma(n_0)^2 = (Tp+\sigma(n_0)) (Tq+\sigma(n_0)). \]
If the left-hand side is nonvanishing, appealing once again to the maximal order of the divisor function we may conclude that there are only $x^{o(1)}$ possibilities for $p,q$, and so only $x^{3/5+o(1)}$ possibilities for $n=n_0 pq$.  If the left-hand side is $0$, then $\sigma(n_0)=-Tp$ or $\sigma(n_0)=-Tq$. If $\sigma(n_0)=-Tp$, then
\begin{align*} \sigma(n_0p) &= -Tp(p+1) = k n_0 p(p+1) - \sigma(n_0) p(p+1)\\
&=  k n_0 p(p+1) - p \sigma(n_0 p);
\end{align*}
hence, $\sigma(n_0 p) = k \cdot n_0 p$. Since
\[ k n_0 p q + a = \sigma(n_0 pq) = \sigma(n_0 p) (q+1) = kn_0 p(q+1) = k n_0 pq + k n_0 p, \]
we have
\[ a = k n_0 p = \sigma(n_0 p). \]
Therefore $n = (n_0 p)q$ is a regular solution to $\sigma(n)=kn+a$ and so should not be counted here. A similar analysis applies when $\sigma(n_0)=-Tq$.

It still remains to deal with the cases when either $p$ or $q$ divides $n_0$ or $p=q$. If $q$ divides $n_0$, then $p = q$. Since there are only $O(\log x)$ primes dividing $n_0$, there are only $O(x^{3/5} \log x)$ possibilities for $n=n_0 pq$. So we may suppose that $q\nmid n_0$. When $p=q$ (and $q\nmid n_0$),
\begin{align*} a &= \sigma(n_0 p^2) - k n_0 p^2 \\&= (\sigma(n_0) - k n_0) p^2 + \sigma(n_0) (p+1). \end{align*}
Given $n_0$, the final right-hand side is a polynomial in $p$ of degree $1$ or $2$, and so $p$ is determined in at most two ways, yielding $O(x^{3/5})$ possibilities for $n$. We are left with the case when $p, q$ are distinct primes, $q\nmid n_0$, but $p\mid n_0$. In this case, $p$ is determined in at most $O(\log x)$ ways from $n_0$. Moreover,
\[ a = \sigma(n_0 p q) - k n_0 pq = \sigma(n_0 p) (q+1) - k n_0 pq = q (\sigma(n_0 p)-k n_0 p) + \sigma(n_0 p). \]
Thus, as long as $\sigma(n_0 p) \ne k n_0 p$, we see $q$ is determined by $n_0$ and $p$, and hence $n = n_0 pq$ is determined in $O(x^{3/5} \log x)$ ways. But if $\sigma(n_0p) = k n_0p$, it is easy to see from the last display that $n=(n_0 p)q$ is actually a regular solution rather than a sporadic solution.

So we have shown that those cases where $n_0 \le x^{3/5}$ contribute at most $x^{3/5+o(1)}$ solutions. So we may assume that $n_0 > x^{3/5}$ and thus that $$ pq = n/n_0 \le x/n_0 < x^{2/5}.$$ Hence, $p < x^{1/5}$, and there is a divisor $d_0$ of $n$ in the interval $(x^{2/5},x^{3/5}]$. Indeed, we may choose $d_0$ as a certain product of the largest several prime factors of $n$.

The rest of the argument follows the proof of the main theorem in \cite{app}.

With $d_0$ chosen as above, we let $d$ be the unitary divisor of $n$ satisfying $\rad(d)=\rad(d_0)$. Write $n = de$. Our strategy is to count the number of values of $e$ corresponding to fixed $d,d_0$, then to sum on $d,d_0$. Since
\begin{equation}\label{eq:aexpression} a = \sigma(d)\sigma(e) - k de,\end{equation}
we have
\[ kde \equiv a \pmod{\sigma(d)}, \]
and thus $e$ is placed in a determined residue class modulo $\sigma(d)/\gcd(\sigma(d),kd)$. Since $e\le x/d$, the number of possibilities for $e$, given $d$ and $d_0$, is
\[ \ll \frac{x \cdot \gcd(\sigma(d),kd)}{d \sigma(d)} + 1 \ll_{k} \frac{x \gcd(\sigma(d),d)}{d\sigma(d)}+1 \ll \frac{x \gcd(\sigma(d),d)}{d^2} + 1. \]

Given $d_0$, the number of $d \le x$ with $\rad(d_0)=\rad(d)$ is $x^{o(1)}$ (see, e.g., \cite[Lemma 4.2]{pollack11}), and so when summing on $d,d_0$ the ``$+1$'' terms contribute at most $x^{3/5+o(1)}$. This is acceptable for us. We deal with the remaining terms as follows. Put $g=\gcd(d,\sigma(d))$; from \eqref{eq:aexpression}, we have that $g\mid a$. Thus,
\[ \sum_{d, d_0} \frac{x \gcd(\sigma(d),d)}{d^2} \le x \sum_{g\mid a} \sum_{\substack{d > x^{2/5} \\ g\mid d}} \frac{g}{d^2} \sum_{d_0 \mid d}1.  \]
The inner sum is just $\tau(d) \le x^{o(1)}$. Now writing $d=gh$, and summing on $h > x^{2/5}/g$, we see that the above expression is
\[ \le x^{1+o(1)} \sum_{g \mid a} \frac{1}{g} \sum_{h > x^{2/5}/g} \frac{1}{h^2} \ll x^{3/5+o(1)} \sum_{g \mid a} 1 \le x^{3/5+o(1)}.\]
Collecting all of our estimates completes the proof of the theorem.
\end{proof}

We close with the following result, which makes the upper bound in \cite[Corollary 3]{pom75} completely uniform.

\begin{proof}[Proof Sketch of Theorem \ref{prop:reallyfinalnotkidding}]
Let $a$ be an arbitrary integer.
We show there are at most $O(x/\log x)$ values of $n\le x$ with $P(n)\mid \sigma(n)-a$.  Via standard estimates,
we may assume that 
\begin{itemize}
\item[(a)] $x/\log x<n\le x$,
\item[(b)] $P(n)>x^{1/\log\log x}$,
\item[(c)] $n$ is not divisible by a proper power larger than $\log^2 x$.
\end{itemize}
Let $p=P(n)$.  By (b) and (c), we may assume that $p^2\nmid n$.  Write $n=pm$, so that
$\sigma(n)=\sigma(m)(p+1)$.  We have
\begin{equation}
\label{eq:mconstraint}
\sigma(m)\equiv a\pmod p.
\end{equation}
Assume that $p>x^{1/2}\log x$ and $x$ is large.  Then $\sigma(m)<m\log x<p$.  Thus, for a given $p$, all of the solutions
$m_1,m_2,\dots, m_t$ to \eqref{eq:mconstraint}, have $\sigma(m_1)=\sigma(m_2)=\dots=\sigma(m_t)$.
Now for any integer $c$, the number of solutions $m\le y$ to $\sigma(m)=c$ is $\le y^{1-(1+o(1))\log_3y/\log_2y}$
as $y\to\infty$, uniformly in $c$.  This is the $\sigma$-analogue of a result in \cite{Pom} (also see \cite{Pomtwo}) for $\varphi$.
So, there is an absolute constant $y_0$ such that if $y\ge y_0$, then the number of solutions $m\le y$ to
$\sigma(m)=c$ is $\le y^{1-1/\log_2y}$.  But for $O(x/\log x)$ choices for $n\le x$, we may assume
that $p\le x/y_0$.  Thus, the number of $m$ in \eqref{eq:mconstraint} for a given $p$ is at most
$(x/p)^{1-1/\log_2(x/p)}$.  We now sum this expression on $p$ with $x^{1/2}\log x<p\le x/y_0$.
If such a $p$ is in an interval $(x/e^{j+1},x/e^{j}]$, then the number of $m$ corresponding to $p$ is
at most $e^{j(1-1/\log j)}$.  Further, the number of choices for $p$ in this interval is $\ll x/(e^j\log(x/e^j))\ll x/(e^j\log x)$,
using $p>x^{1/2}\log x$.  Thus, the number of choices for $n$ is
$$
\ll \frac{x}{e^{j/\log j}\log x}.
$$
Summing on $j$ gets us $\ll x/\log x$.  

We now assume that $p\le x^{1/2}\log x$.  This implies that $m> x^{1/2}/\log^2 x$.  Let $m=uq$ where $q=P(m)$.
By standard estimates on smooth numbers, the number of integers $mp\le x$ with $m> x^{1/2}/\log^2 x$ and $P(m)\le \log x$ is at most $x^{1/2+o(1)}$ as
$x\to\infty$, so we may assume that $q>\log x$, and so, by (c), $q\nmid u$.  Thus, \eqref{eq:mconstraint}
implies that
\begin{equation}
\label{eq:qconstraint}
\sigma(u)q\equiv a-\sigma(u)\pmod p.
\end{equation}
We may assume that $p\nmid\sigma(n)$.  Indeed, otherwise, there is a prime power $r^a\mid n$
with $p\mid\sigma(r^a)$, and since $r^a>\frac12\sigma(r^a)\ge \frac12p$, (b) and (c) imply that $a=1$, that is,
$r\equiv-1\pmod p$.  Since we may assume that $x$ is large enough that $p>3$, we have $r>p$, contradicting $p=P(n)$.
Thus, we may assume that $p\nmid\sigma(u)$ in \eqref{eq:qconstraint}.  Hence, given $u,p$,
\eqref{eq:qconstraint} completely determines $q$, using $q<p$.  But the number of choices for
$u,p$ with $up<x/\log x$ is $<x/\log x$, and so this estimate completes the proof.
\end{proof}

\section*{Acknowledgments} The first author is supported by the National Science Foundation under Grant No. DMS-1402268. This work was completed while the second and third authors were in residence at the Mathematical Sciences Research Institute, during which time they were supported by the National Science Foundation under Grant No. DMS-1440140. The third author is also supported by an AMS Simons Travel Grant.




\begin{thebibliography}{11}


\bibitem{agp}
W.~R.~Alford, A.~Granville, and C.~Pomerance, \emph{There are infinitely many Carmichael numbers}, Ann. of Math. (2) \textbf{139} (1994), 703--722.

\bibitem{app}
A.~Anavi, P.~Pollack, and C.~Pomerance, \emph{On congruences of the form $\sigma(n) \equiv a \pmod{n}$}, Int. J. Number Theory \textbf{9} (2012), 115--124.



\bibitem{booker}
A.~R.~Booker, \emph{Finite connected components of the aliquot graph}, 	arXiv:1610.07471 [math.NT].
%

\bibitem{E35}
P.~Erd\H os, \emph{On the normal number of prime factors of $p-1$ and some related problems
concerning Euler's $\phi$-function}, Quart. J. Math. (Oxford Ser.)  {\bf6} (1935), 205--213.



\bibitem{E36}
\bysame, \emph{On integers which are the totient of a product of two primes},
Quart. J. Math. (Oxford Ser.) {\bf 7} (1936), 16--19.

\bibitem{erdos}
\bysame, \emph{\"{U}ber die Zahlen der Form $\sigma(n) - n$ und $n - \phi(n)$}, Elem. Math. \textbf{28} (1973), 83--86.

\bibitem{egps}
P.~Erd\H{o}s, A.~Granville, C.~Pomerance, and C.~Spiro, \emph{On the normal behavior of the iterates of some arithmetic functions}, Analytic Number Theory, Proc. Conf. in honor of Paul T. Bateman, B. C. Berndt, et al. eds., Birkhauser, Boston (1990), 165--204.

\bibitem{ford08}
K.~Ford, \emph{The distribution of integers with a divisor in a given interval},
Ann. of Math. \textbf{168} (2008), 367--433.

\bibitem{hw57}
B.~Hornfeck and E.~Wirsing, \emph{\"{U}ber die {H}\"{a}ufigkeit vollkommener {Z}ahlen}, Math. Ann. \textbf{133} (1957), 431--438. 





\bibitem{pollack11}
P.~Pollack, \emph{On the greatest common divisor of a number and its sum of divisors},
Michigan Math. J. \textbf{60} (2011), 199--214.

\bibitem{pollack14}
\bysame, \emph{Some arithmetic properties of the sum of proper divisors and the sum of prime divisors}, Illinois J. Math. \textbf{58} (2014), 125--147.

\bibitem{pollack15}
\bysame, \emph{Palindromic sums of proper divisors}, Integers \textbf{15A} (2015), Paper No. A13, 12 pages.


\bibitem{polpom0}
P.~Pollack and C.~Pomerance, \emph{On the distribution of some integers related to perfect and amicable numbers}, Colloq. Math. \textbf{130} (2013), 169--182.

\bibitem{polpom}
\bysame, \emph{Some problems of Erd\H{o}s on the sum-of-divisors function}, Trans. Amer. Math. Soc., Ser. B, \textbf{3} (2016), 1--26.

\bibitem{pom75}
C.~Pomerance, \emph{On the congruences $\sigma(n) \equiv a \pmod{n}$ and $n \equiv a \pmod{\phi(n)}$}, Acta Arith. \textbf{26} (1975), 265--272.

\bibitem{Pom}
\bysame, \emph{Popular values of Euler's function}, Mathematika \textbf{27} (1980), 84--89.

\bibitem{Pomtwo}
\bysame, \emph{Two methods in elementary analytic number theory}, in Number Theory and Applications,
R. A. Mollin, ed., Kluwer Academic Publishers, Dordrecht, 1989, pp.\ 135--161.

\bibitem{firstfunction}
\bysame, \emph{The first function and its iterates}, Connections in Discrete Mathematics: A Celebration of the Work of Ron Graham, S. Butler, J. Cooper, and G. Hurlbert, eds., Cambridge U. Press, to appear.

\bibitem{prachar}
K.~Prachar, \emph{\"{U}ber die Anzahl der Teiler einer nat\"{u}rlichen Zahl, welche die Form $p-1$ haben}, Monatsh. Math. \textbf{59} (1955), 91--97.

\bibitem{troupe}
L.~Troupe, \emph{On the number of prime factors of values of the sum-of-proper-divisors function}, J. Number Theory \textbf{150C} (2015), 120--135.

\end{thebibliography}
\end{document}